\newcommand{\grad}{\mathrm{grad}}
\newcommand{\Span}{\mathrm{Span}}
\newcommand{\Tr}{\mathrm{Tr}}
\newcommand{\type}{\mathrm{type}}
\newcommand{\twocomp}{{}^{{\kern -.7pt}\wedge}_2}
\newcommand{\ellcomp}{{}^{{\kern -.5pt}\wedge}_{\ell}}
\DeclareMathOperator{\im}{im}
\newcommand{\Hcomp}{{}^{{\kern -.4pt}\wedge}_{{\kern -.5pt}H}}
\newcommand{\GW}{\mathrm{GW}}
\newcommand{\MW}{\mathrm{MW}}
\theoremstyle{definition} 
\newtheorem{proposition}[equation]{Proposition}
\newtheorem{definition}[equation]{Definition} 
\newtheorem{theorem}[equation]{Theorem}
\newtheorem{example}[equation]{Example} 
\newtheorem{corollary}[equation]{Corollary} 
\newtheorem{lemma}[equation]{Lemma} 
\theoremstyle{remark} 
\newtheorem{remark}[equation]{Remark}
\newcommand{\bbA}{\mathbb{A}} 
\newcommand{\bbC}{\mathbb{C}}
\newcommand{\bbF}{\mathbb{F}} 
\newcommand{\bbH}{\mathbb{H}}
\newcommand{\bbP}{\mathbb{P}}
\newcommand{\bbQ}{\mathbb{Q}}
\newcommand{\bbR}{\mathbb{R}}
\newcommand{\bbZ}{\mathbb{Z}}
\newcommand{\frakm}{\mathfrak{m}}
\begin{document}

\title{Representability of the local motivic Brouwer degree}
\author{Gereon Quick}
\email{gereon.quick@ntnu.no}
\author{Therese Strand}
\email{therese.strand@ntnu.no}
\author{Glen Matthew Wilson}
\email{glen.m.wilson@ntnu.no}
\address{Department of Mathematics, NTNU, Norway.}

\date{\today}

\begin{abstract}
We study which quadratic forms are representable as the local degree of a map $f : \bbA^n \to \bbA^n$ with an isolated zero at $0$, following the work of Kass and Wickelgren who established the connection to the quadratic form of Eisenbud, Khimshiashvili, and Levine. Our main observation is that over some base fields $k$, not all quadratic forms are representable as a local degree. Empirically the local degree of a map $f : \bbA^n \to \bbA^n$ has many hyperbolic summands, and we prove that in fact this is the case for local degrees of low rank. We establish a complete classification of the quadratic forms of rank at most $7$ that are representable as the local degree of a map over all base fields of characteristic different from $2$. The number of hyperbolic summands was also studied by Eisenbud and Levine, where they establish general bounds on the number of hyperbolic forms that must appear in a quadratic form that is representable as a local degree. Our proof method is elementary and constructive in the case of rank 5 local degrees, while the work of Eisenbud and Levine is more general. We provide further families of examples that verify that the bounds of Eisenbud and Levine are tight in several cases. 
\end{abstract}

\maketitle

\section{Introduction}

The degree of a map in differential topology is a homotopy invariant that completely classifies the homotopy classes of maps between spheres of the same dimension. The work of Morel \cite[Corollary 1.24]{Morel12} establishes an analog of this calculation in the setting of motivic homotopy theory, where algebraic varieties like $\bbP^1$ and $\bbA^n\setminus \{0\}$ can be viewed as spheres. Morel replaces the degree of a map between spheres with the $\bbA^1$-degree, $\deg^{\bbA^1}$, a motivic version that takes values in the Grothendieck--Witt ring of the base field $k$, rather than merely the ring of integers. In differential topology, the degree of a map $f : S^n \to S^n$ can be calculated as a sum of local degrees and one must wonder if an analogous result could also be true in motivic homotopy theory as well. 

If one assumes the existence of a global degree map $\deg : [S^n, S^n] \to \bbZ$, it is not hard to give an abstract definition of a local degree in topology. By working with coordinate charts, one can reduce the problem to defining the local degree of a map $f : \bbR^n \to \bbR^n$ with an isolated zero at $x \in \bbR^n$. From such a map, we can find some $\epsilon > 0$ and construct $\frac{f-f(x)}{\lvert f-f(x) \rvert} : S^{n-1}(x;\epsilon) \to S^{n-1}$. We then define the local degree of $f$ at $x$ to be the global degree of $\frac{f-f(x)}{\lvert f - f(x) \rvert}$. 

A modified construction works in motivic homotopy theory, which was studied by Lannes and Morel (see \cite{WilliamsWickelgren}) and Kass and Wickelgren \cite{KassWickelgren}. 
Let $k$ be a field with characteristic different from 2. Write $\MW(k)$ for the semiring of isometry classes of non-degenerate quadratic forms over the field $k$ and $\GW(k)$ for the group completion of $\MW(k)$, i.e., the Grothendieck--Witt group of $k$. We will use the notation $\langle a_1,...,a_n\rangle$ for the element of $\GW(k)$ represented by the quadratic form $q(x) = \sum_{i=1}^n a_ix_i^2$ where $a_i \in k^{\times}$ for $1\leq i \leq n$.  
For any map $f : \bbA^n \to \bbA^n$ with an isolated zero at $0$, Kass and Wickelgren \cite[Definition 11]{KassWickelgren} construct a map $f_0 : \bbP^n/\bbP^{n-1} \to \bbP^n/\bbP^{n-1}$ in the motivic homotopy category. The space $\bbP^n/\bbP^{n-1}$ is a sphere in motivic homotopy theory and the map $f_0$ encodes the local behavior of $f$ at $0$, just like the local degree in algebraic topology. We can then apply Morel's $\bbA^1$-Brouwer degree $\deg^{\bbA^1} : [\bbP^n/\bbP^{n-1}, \bbP^n/\bbP^{n-1}] \to \GW(k)$ to $f_0$ to obtain the class of a quadratic form in $\GW(k)$ that acts as the local degree of $f$ at $0$ in motivic homotopy theory. We write $\deg_0^{\bbA^1}(f) = \deg^{\bbA^1}(f_0)$ for this motivic local degree. 
The main result of Kass and Wickelgren in \cite{KassWickelgren} is that the local degree $\deg^{\bbA^1}_0(f)$ is equal to the class of the Eisenbud--Khimshiashvili--Levine form of $f$ at $0$ (henceforth EKL form of $f$ at $0$) in $\GW(k)$. 
With this result, one can turn algebraic results about EKL forms into statements about Morel's degree homomorphism and motivic homotopy theory.
In this paper, we study the structure of EKL forms of a map $f : \bbA^n \to \bbA^n$ with isolated zero at $0$ and give several applications to motivic homotopy theory and the study of real singularities.

In topology it is simple to come up with explicit maps $f : \bbR^n \to \bbR^n$ when $n \geq 2$ with an isolated zero at 0 that have a prescribed local degree $\deg_0 (f) = n$. In this paper, we show that a similar statement in motivic homotopy theory is false over a general field $k$. 
Precisely, for an arbitrary element $q \in \MW(k)$, the equation $\deg^{\bbA^1}_0(f) = q$ does not always admit a solution $f : \bbA^n \to \bbA^n$. Of course, the solvability of these equations depends on the base field. For instance, if $-1$ is not a square in the field $k$, then we show that the quadratic form $q(x,y) = x^2 + y^2$, is never representable as the local degree of a map $f: \bbA^n \to \bbA^n$ at $0$. In fact, we show something much stronger: every quadratic form with rank at least 2 that is representable as a local degree contains the hyperbolic form $\bbH(x,y) = x^2 - y^2$ as a direct summand. 

This theorem is proven by proving the corresponding result for EKL forms at $0$. That is, every EKL form at $0$ that has rank at least 2 contains $\bbH$ as a direct summand, see Theorem \ref{thm:H-summand}. This result, coupled with some explicit examples, gives a complete description of which quadratic forms of rank at most $4$ are representable as an EKL form at $0$. It takes us considerably more effort to produce a complete classification of EKL forms of rank 5, which is Theorem \ref{thm:rank-5}: Any quadratic form of rank 5 that is representable as a local degree at $0$ is of the form $2\bbH + \langle a \rangle$ for some unit $a$. This result then can be translated into motivic homotopy theory using Morel's degree isomorphism to say something about which maps of spheres are representable as local degree maps. See Theorem \ref{thm:maps-rank-5} for a precise statement. 
The main tool we use in the classification of the rank 5 EKL forms is a dimension reduction argument, stated in Theorem \ref{thm:dimension_reduction}. This generalizes an observation McKean had in the 2-variable case \cite[Lemma 5.7]{Mckean}. The effect of our dimension reduction result allows us to reduce the study of rank 5 EKL forms to maps $f: \bbA^2 \to \bbA^2$. This can then be checked directly by brute force using some properties of Gr{\"o}bner bases. 

After discovering the results above about the representability of quadratic forms as local degrees in motivic homotopy theory, we found that an analogous question was originally studied by Eisenbud and Levine, using a technical result of Teissier in their paper \cite{ELT}. 
In this paper, Eisenbud and Levine tried to understand how the local degree of a smooth map germ $f : \bbR^n \to \bbR^n$ could be determined from the structure of the local ring $Q_0(f)$. 
They show that the topological local degree of $f$ at $0$ is the signature of the EKL form of $f$ at $0$ and then investigate what kind of restrictions there are on the signature of an EKL form when the dimension of $Q_0(f)$ is known. 
Without explicitly stating it, they are simply obtaining bounds on the number of hyperbolic summands that the EKL form must have for a fixed rank---exactly the question we set out to study, albeit in a slightly different context. 
Eisenbud and Levine ultimately establish a more general result on the minimal number of hyperbolic forms that must appear in an EKL form of fixed rank using a fundamental inequality of Teissier \cite[Appendix, p.\ 38]{ELT}. 
They show that an EKL form of rank $N$ arising from a map $f : \bbA^n \to \bbA^n$ must have at least $\frac{N - N^{1-1/n}}{2}$ hyperbolic summands \cite[Theorem 3.9(i)]{ELT}. 
They also include the weaker statement without a dependence on the dimension $n$ that an EKL form of rank $N$ must contain at least $\frac{N}{4}$ hyperbolic summands \cite[Theorem 3.9(ii)]{ELT}.
Their general inequality gives an alternative proof that a rank 5 EKL form must be of the form $2\bbH +\langle a \rangle$. 

We are pleased to see that our observations about the number of hyperbolic summands in EKL forms can indeed be generalized and refined to the inequalities of Eisenbud, Levine, and Teissier \cite{ELT}. We believe our method of proof for establishing the exact number of hyperbolic forms that must appear in an EKL form of rank 5 can be of use to tighten the inequalities of Eisenbud and Levine. We hope that our observation that not all automorphisms of spheres are representable as local degrees in motivic homotopy theory is of interest to other researchers in the field. 

We do believe we contribute some new results to the subject. It appears that our verification that the bounds of Eisenbud and Levine are tight in the specific case of EKL forms arising from maps of the plane $ f : \bbA^2 \to \bbA^2$ with rank $4^n$ and $9^n$ is new. The chain rule is the key idea to produce the examples that show the bounds are tight. An algebraic proof of the chain rule was given by \cite{KST_published}, which we revisit with a few minor corrections. We also obtain a complete classification of the quadratic forms that are realizable as local degrees over finite fields of odd characteristic. The applications listed to local degrees of real maps were certainly known to Eisenbud and Levine \cite{ELT} and Arnold \cite{Arnold-1978}, but we include them to round out the story.

\subsection*{Notation}
We make heavy use of the notation and results from the work of Kass and Wickelgren \cite{KassWickelgren}, and refer the reader there for a more thorough overview. Write $\bbA^n=\bbA^n_k$ for affine $n$-space over the field $k$. If $f \colon \bbA^n \to \bbA^n$ is a morphism of affine spaces over $k$, we represent $f$ with coordinate functions $f = (f_1,...,f_n)$ where each $f_i$ is a polynomial $f_i \in k[x_1,...,x_n]$. We will write $P = k[x_1,...,x_n]$ for the polynomial ring over $k$ in $n$ variables and $\frakm = (x_1,...,x_n)$ for the maximal ideal corresponding to the origin. The local ring of $\bbA^n$ at $0$ is $P_0 = k[x_1,...,x_n]_\frakm$, and we write $Q_0(f) = P_0/(f_1,...,f_n)$ for the local ring of $f$ at $0$. The map $f$ is said to have an isolated zero at $0$ if $Q_0(f)$ is a finite dimensional vector space over $k$. We write $E=E(f)=E_0(f) \in Q_0(f)$ for the distinguished socle element of $f$ at the origin, which may be described as follows. First find expressions $f_i(x) = f_i(0) +  \sum_{j=1}^n a_{ij}x_j$ with polynomials $a_{ij}\in P$, then define $E$ to be the image of $\det(a_{ij})$ in $Q_0(f)$.

\section{Computations of local degrees}
\label{sec:computations}

Let $k$ denote a field with characteristic different from 2. 
Kass and Wickelgren remark in \cite[Remark 2 and Lemma 4]{KassWickelgren} that the socle of $Q_0(f)$ is equal to the annihilator of the maximal ideal. Moreover, $E$ generates the socle of $Q_0(f)$ if $f$ has an isolated zero at the origin. 
This enables Kass and Wickelgren to make the following definition \cite[Lemma 6 and Definition 7]{KassWickelgren}.

\begin{definition}
\label{def:EKL}
Let $f : \bbA^n \to \bbA^n$ be a map with an isolated zero at $0$ and consider a linear function $\phi : Q_0(f) \to k$ that satisfies $\phi(E) = 1$. Define the symmetric bilinear $\beta_\phi$ on $Q_0(f)$ by the formula $\beta_\phi(x,y) = \phi(x\cdot y)$. The EKL form of $f$ at $0$, denoted by $w_0(f)$, is the class of the symmetric bilinear form $\beta_{\phi}$ on $Q_0(f)$ in $\GW(k)$.
\end{definition}

The EKL form described above is in fact well defined. Any two linear functions $\phi$ and $\phi'$ which satisfy $\phi(E) = \phi'(E) = 1$ yield isometric bilinear forms $\beta_\phi \cong \beta_{\phi'}$. Furthermore, when $\phi(E)=1$ it follows that the form $\beta_{\phi}$ is non-degenerate. We say the rank of $w_0(f)$ is the rank of the symmetric bilinear form $\beta_{\phi}$, which is just the dimension of $Q_0(f)$ as a $k$-vector space. As we assume the characteristic of $k$ is not 2, symmetric bilinear spaces and quadratic spaces are equivalent; we freely consider $\beta_\phi$ as both a symmetric bilinear form and a quadratic form without any distinction. As a quadratic form, $\beta_\phi$ is defined as $\beta_\phi(x) = \phi(x\cdot x)$.

The observation that $E$ generates the annihilator of $Q_0(f)$ leads to our first result. 

\begin{theorem}
\label{thm:H-summand}
Let $f \colon \bbA^n \to \bbA^n$ be a map with an isolated zero at $0$. If the dimension of $Q_0(f)$ as a $k$-vector space is at least 2, then $\bbH$ is a direct summand of the EKL class $w_0(f)$.
\end{theorem}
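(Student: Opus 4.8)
The plan is to exhibit an explicit hyperbolic plane inside the bilinear form $\beta_\phi$ on $Q_0(f)$ by finding a pair of vectors spanning a 2-dimensional subspace on which $\beta_\phi$ restricts to $\bbH$, and then invoke the standard fact that a non-degenerate subspace of a non-degenerate symmetric bilinear space splits off as an orthogonal direct summand. The natural candidates for the two vectors are the unit $1 \in Q_0(f)$ and the socle generator $E$. Since $\dim_k Q_0(f) \geq 2$, the maximal ideal $\frakm Q_0(f)$ is nonzero; because $Q_0(f)$ is a finite-dimensional local $k$-algebra, $\frakm Q_0(f)$ is nilpotent, so $E$, which generates the socle $= \mathrm{Ann}(\frakm Q_0(f))$, lies in $\frakm Q_0(f)$ and in particular $E$ and $1$ are linearly independent (indeed $E$ is not a unit, while any nonzero multiple of $1$ is).

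First I would compute the Gram matrix of $\beta_\phi$ restricted to $W = \Span_k\{1, E\}$. We have $\beta_\phi(1,1) = \phi(1)$, $\beta_\phi(1,E) = \phi(E) = 1$, and $\beta_\phi(E,E) = \phi(E^2)$. Now $E \in \frakm Q_0(f)$ and $E$ annihilates $\frakm Q_0(f)$, so $E^2 = 0$; hence $\beta_\phi(E,E) = 0$. Thus the Gram matrix is $\begin{pmatrix} \phi(1) & 1 \\ 1 & 0 \end{pmatrix}$, which has determinant $-1 \neq 0$, so $\beta_\phi|_W$ is non-degenerate, and a binary non-degenerate form with a nonzero isotropic vector (namely $E$) is isometric to $\bbH$. (Equivalently: the matrix $\begin{pmatrix} \phi(1) & 1 \\ 1 & 0\end{pmatrix}$ represents the hyperbolic form since its determinant is $-1$ and it is isotropic; replacing $1$ by $1 - \tfrac{\phi(1)}{2}E$ diagonalizes it to $\langle 1, -1\rangle$ using $\Char k \neq 2$.)

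Next I would split this off: since $W$ is a non-degenerate subspace of the non-degenerate bilinear space $(Q_0(f),\beta_\phi)$, we have an orthogonal decomposition $Q_0(f) = W \perp W^{\perp}$, so $w_0(f) = \bbH + [\beta_\phi|_{W^\perp}]$ in $\GW(k)$, which gives the claim. The only genuinely substantive input is the algebraic fact, already granted in the excerpt from Kass–Wickelgren, that $E$ generates the socle and the socle equals $\mathrm{Ann}(\frakm Q_0(f))$; this is what forces $E^2 = 0$ and $E \in \frakm Q_0(f)$. There is essentially no hard step: the main point to be careful about is verifying that $1$ and $E$ are linearly independent (so that $W$ really is 2-dimensional), which I would handle by noting that any element of $\frakm Q_0(f)$ is nilpotent while $1$ is a unit, so a relation $a\cdot 1 + b\cdot E = 0$ with $b \neq 0$ would make $1$ nilpotent, forcing $a = 0$ and then $E = 0$, contradicting $\phi(E) = 1$; and $a\cdot 1 = 0$ forces $a = 0$. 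I expect the write-up to be short.
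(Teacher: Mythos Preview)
Your proposal is correct and follows essentially the same approach as the paper: both arguments hinge on the observation that $E\in\frakm Q_0(f)$ when $\dim_k Q_0(f)\ge 2$, which produces an isotropic vector for $\beta_\phi$ and hence a hyperbolic summand. The paper exploits this by choosing $\phi(1)=0$ so that $1$ is isotropic and then citing Witt's theorem; you instead keep $\phi$ arbitrary, use $E$ as the isotropic vector (via $E^2=0$), and exhibit the hyperbolic plane $\Span\{1,E\}$ explicitly---a slightly more self-contained presentation of the same idea. One small wobble: your sentence ``$\frakm Q_0(f)$ is nilpotent, so $E\ldots$ lies in $\frakm Q_0(f)$'' does not quite justify $E\in\frakm$ on its own; the clean reason (which the paper gives and which you essentially use later) is that if $E\notin\frakm$ then $E$ is a unit, forcing $\frakm=0$.
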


\begin{proof}
The distinguished socle element $E \in Q_0(f)$ generates the annihilator of the ideal $\frakm = (x_1,...,x_n)$ in $Q_0(f)$. If we assume that $E \notin \frakm$, then $E$ is a unit in $Q_0(f)$. So as $E$ annihilates each $x_i$, the equations $E\cdot x_i = 0$ in $Q_0(f)$ imply $x_i = 0$ in $Q_0(f)$. Hence $Q_0(f)$ is spanned by $1$, and $\dim_k Q_0(f) \leq 1$. We therefore conclude $E \in \frakm$. Hence in choosing a $k$-linear function $\phi : Q_0(f) \to k$ we are free to take $\phi(1)=0$. Recall that $\beta_\phi$ is a non-degenerate quadratic form and observe that $\beta_\phi(1) = \phi(1\cdot 1) = 0$, that is, $1$ is a non-trivial isotropic vector. Witt proves in \cite[Satz 5]{Witt} that under these circumstances, the quadratic form $\beta_\phi$ admits $\bbH$ as a direct summand. This completes the proof. 
\end{proof}

In Lemmas \ref{lem:2dim-1} through \ref{lem:n-m-powers} we collect some basic calculations of EKL forms. These have appeared in the work of others, such as Pauli, Kass--Wickelgren, McKean, etc. We include the statements and proofs for completeness as they provide, in particular, concrete examples of EKL forms in low degrees. 
Lemma \ref{lem:1dim} shows that there is an EKL form $w_0(f)=\bbH$ of rank two and an EKL form of rank 3 with $w_0(f) = \bbH \oplus \langle a \rangle$ for a unit $a\in k^{\times}$, 
while Lemma \ref{lem:2dim-1} shows that there are EKL forms of rank 4 of either form $\bbH \oplus \bbH$ or $\bbH \oplus \langle a,b \rangle$ with arbitrary units $a,b\in k^{\times}$. 
We will extend this list in the next section.  
Lemma \ref{lem:2dim-1} will provide examples for the study of EKL forms over finite fields in section \ref{sec:finite-fields}. 

In the course of the calculations, we frequently encounter symmetric bilinear forms with an anti-diagonal Gram matrix. Specifically, we encounter $n\times n$ matrices $A  = (a_{ij})_{i,j=1}^n$ where $a_{ij} = a\neq 0$ when $i+j = n+1$ and $a_{ij}=0$ otherwise. The associated quadratic form of $A$ is isometric to $\frac{n}{2}\bbH$ when $n$ is even and $\frac{n-1}{2}\bbH \oplus \langle a \rangle$ when $n$ is odd. For example, any $2\times 2$ anti-diagonal matrix with $a_{1,2}=a_{2,1}=a$ is easily seen to be congruent to the diagonal matrix $\mathrm{diag}\{1, -1\}$, and thus the associated quadratic form is isometric to the hyperbolic form $\bbH$. 

\begin{lemma}
\label{lem:2dim-1}
For any constants $a,b \in k^{\times}$, consider the map $f : \bbA^2 \to \bbA^2$ given by $f_1 = xy$ and $f_2 = -ax^2 + by^2$. If $\phi : Q_0(f) \to k$ is a $k$-linear function with $\phi(E)=1$, the associated quadratic form $\beta_\phi$ is congruent to the diagonal form $\langle 1, -1, a, b\rangle$.
\end{lemma}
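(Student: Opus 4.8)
The plan is to compute the local ring $Q_0(f)$ explicitly, identify the socle element $E$, choose a convenient functional $\phi$, and read off the Gram matrix of $\beta_\phi$ in a well-chosen basis.

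First I would analyze the ideal $(f_1, f_2) = (xy, -ax^2 + by^2)$ in $P_0 = k[x,y]_{\frakm}$. From $xy = 0$ and $-ax^2 + by^2 = 0$ one gets $ax^3 = by^2 \cdot x \cdot (\text{stuff})$ — more carefully, $x \cdot (-ax^2+by^2) = -ax^3 + bxy^2 = -ax^3$ modulo $(xy)$, so $x^3 \equiv 0$ and similarly $y^3 \equiv 0$ in $Q_0(f)$. One then checks that $\{1, x, y, x^2\}$ (equivalently $\{1, x, y, y^2\}$, since $ax^2 \equiv by^2$) spans $Q_0(f)$ and that these are linearly independent, so $\dim_k Q_0(f) = 4$. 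This is a routine Gröbner-basis / monomial-ideal computation; I would present it compactly by exhibiting the monomial basis and the multiplication relations $x^2 y = x y^2 = 0$, $x^3 = y^3 = 0$, $a x^2 = b y^2$.

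Next I would compute the distinguished socle element $E$. Writing $f_1 = xy = 0 + y\cdot x + 0 \cdot y$ and $f_2 = -ax^2 + by^2 = 0 + (-ax)\cdot x + (by)\cdot y$, the matrix $(a_{ij})$ may be taken to be $\begin{pmatrix} y & 0 \\ -ax & by \end{pmatrix}$, whose determinant is $by^2$. Hence $E = by^2$ in $Q_0(f)$; note this is a nonzero element of the socle (it is annihilated by $\frakm$ since $x\cdot y^2 = 0 = y\cdot y^2$), consistent with Theorem~\ref{thm:H-summand}. I would remark that $E = by^2 = ax^2$ in $Q_0(f)$, so either expression works.

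Then I would pick the functional. Using the basis $\{1, x, y, x^2\}$, define $\phi$ by $\phi(1) = \phi(x) = \phi(y) = 0$ and $\phi(x^2) = 1/a$, so that $\phi(E) = \phi(by^2) = \phi((b/a)\cdot ax^2)\cdot$ — cleaner: since $E = ax^2$, set $\phi(x^2) = 1/a$, giving $\phi(E) = 1$. Now compute the Gram matrix of $\beta_\phi(u,v) = \phi(uv)$ on this basis. The products are: $1\cdot 1 = 1$, $1\cdot x = x$, $1\cdot y = y$, $1\cdot x^2 = x^2$, $x\cdot x = x^2$, $x\cdot y = 0$, $x\cdot x^2 = x^3 = 0$, $y\cdot y = y^2 = (a/b)x^2$, $y\cdot x^2 = 0$, $x^2\cdot x^2 = 0$. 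Applying $\phi$, the only nonzero entries are $\beta_\phi(1, x^2) = 1/a$, $\beta_\phi(x,x) = 1/a$, and $\beta_\phi(y,y) = 1/b$. So in the ordered basis $(1, x^2, x, y)$ the Gram matrix is block diagonal: an anti-diagonal $2\times 2$ block $\begin{pmatrix} 0 & 1/a \\ 1/a & 0 \end{pmatrix}$ and a diagonal block $\mathrm{diag}\{1/a, 1/b\}$. By the anti-diagonal remark preceding the lemma, the first block is isometric to $\bbH \cong \langle 1, -1 \rangle$; the second block is $\langle 1/a, 1/b \rangle = \langle a, b \rangle$ in $\GW(k)$ (since $\langle c \rangle = \langle c^{-1}\rangle$, as $c = c^{-1}\cdot c^2$). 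Hence $\beta_\phi \cong \langle 1, -1, a, b\rangle$, as claimed.

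The main obstacle is purely bookkeeping: correctly establishing that $\{1, x, y, x^2\}$ is a $k$-basis of $Q_0(f)$ (that nothing further collapses and that the dimension is exactly $4$, not smaller) and tracking the unit $a$ versus $a^{-1}$ through the functional normalization. Everything else is linear algebra. I would double-check the basis claim by noting that $(xy, ax^2 - by^2)$ has $\{xy, x^3, y^3\}$ (or a suitable subset) as a Gröbner basis under a graded order, whose standard monomials are exactly $1, x, y, x^2, y^2$ with the single relation $ax^2 = by^2$ cutting the count to $4$.
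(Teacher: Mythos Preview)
Your proof is correct and follows essentially the same approach as the paper: compute $Q_0(f)$, identify a monomial basis, find $E$, normalize $\phi$, and read off the Gram matrix. The only cosmetic difference is that the paper uses the basis $\{1,y,y^2,x\}$ with $\phi(y^2)=b^{-1}$, obtaining a $3\times 3$ anti-diagonal block plus a $1\times 1$ block, whereas you use $\{1,x^2,x,y\}$ with $\phi(x^2)=a^{-1}$, obtaining a $2\times 2$ anti-diagonal block plus two diagonal entries; both diagonalize to $\langle 1,-1,a,b\rangle$. (One small slip in your final paragraph: the Gr\"obner basis is $\{xy,\,-ax^2+by^2,\,y^3\}$ with leading-term ideal $(xy,x^2,y^3)$, not $\{xy,x^3,y^3\}$; the standard monomials $1,x,y,y^2$ already give dimension $4$ directly.)
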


\begin{proof}
Use the graded reverse lexicographic ordering on $k[x,y]$ with $x>y$. The $S$-polynomial of $f_1$ and $f_2$ is $S(f_1, f_2) =\frac{b}{a}y^3$, which yields a Gr{\"o}bner basis $(xy, -ax^2+by^2, y^3)$ for the ideal $(f_1, f_2) \subseteq k[x,y]$. As both $x^3$ and $y^3$ are in the ideal $(f_1, f_2)$, the quotient ring $k[x,y]/(f_1,f_2)$ is a local ring with maximal ideal $(x_1,x_2)$. By basic Gr{\"o}bner basis theory, the ring $Q_0(f) = k[x,y]/(f_1,f_2)$ has $\{1,y,y^2,x\}$ as a $k$-basis. One calculates $E = b y^2$, and so we may take $\phi(y^2) = b^{-1}$ and declare $\phi$ to vanish on $1$, $x$, and $y$. Using the ordered basis $\{1,y,y^2,x\}$, the matrix for the symmetric bilinear form $\beta_\phi$ is 
\begin{equation}
    \begin{pmatrix}
    0 & 0 & b^{-1} & 0 \\ 
    0 & b^{-1} & 0 & 0 \\
    b^{-1} & 0 & 0 & 0 \\
    0 & 0 & 0 & a^{-1}
    \end{pmatrix},
\end{equation}
which is congruent to the diagonal form $\langle 1, -1, a, b \rangle$.
\end{proof}

\begin{lemma}
\label{lem:1dim}
For any unit $a \in k^{\times}$ let $f : \bbA^1 \to \bbA^1$ be the map given by $x \mapsto ax^n$ with $n \geq 2$. Then the EKL class of $f$ is $w_0(f) = \frac{n}{2} \bbH$ when $n$ is even and $w_0(f) = \frac{n-1}{2}\bbH \oplus \langle a \rangle$ when $n$ is odd.
\end{lemma}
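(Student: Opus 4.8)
The plan is to compute $w_0(f)$ directly, which is entirely feasible here since $f$ is a self-map of the affine line. Because $a \in k^\times$, the principal ideal $(ax^n)$ coincides with $(x^n)$ in $P_0 = k[x]_{\frakm}$, so $Q_0(f) \cong k[x]/(x^n)$; this has monomial basis $\{1, x, x^2, \dots, x^{n-1}\}$, which already confirms that the rank of $w_0(f)$ is $n$. To identify the distinguished socle element, I would write $f_1(x) = ax^n = (ax^{n-1})\cdot x$, so the $1\times 1$ matrix $(a_{ij})$ from the Notation section is $(ax^{n-1})$, and $E$ is the class of $ax^{n-1}$ in $Q_0(f)$; in particular $E \in \frakm$, consistent with Theorem \ref{thm:H-summand}.

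Next I would choose the functional $\phi \colon Q_0(f) \to k$ with $\phi(E) = 1$ by setting $\phi(x^{n-1}) = a^{-1}$ and letting $\phi$ vanish on $1, x, \dots, x^{n-2}$. With respect to the ordered basis $\{1, x, \dots, x^{n-1}\}$, the Gram matrix of $\beta_\phi$ has $(i,j)$-entry $\phi\bigl(x^{(i-1)+(j-1)}\bigr)$, which equals $a^{-1}$ exactly when $i+j = n+1$ and is $0$ otherwise (using $x^m = 0$ in $Q_0(f)$ for $m \geq n$, and $\phi(x^m) = 0$ for $m < n-1$). This is precisely the $n\times n$ anti-diagonal matrix with constant entry $a^{-1}$ discussed in the paragraph preceding the lemma, whose associated quadratic form is $\tfrac{n}{2}\bbH$ when $n$ is even and $\tfrac{n-1}{2}\bbH \oplus \langle a^{-1}\rangle$ when $n$ is odd. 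Since $a^{-1} = a\cdot (a^{-1})^2$, we have $\langle a^{-1}\rangle = \langle a\rangle$ in $\GW(k)$, and this yields the claimed formula.

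There is essentially no serious obstacle in this argument: the only points needing a little care are keeping track of which monomials vanish in $Q_0(f)$ when forming the Gram matrix, and the (harmless) identification of $\langle a^{-1}\rangle$ with $\langle a\rangle$ — alternatively one could absorb the unit by rescaling basis vectors, but invoking the square-class identity is cleaner. If desired, one can double-check the even case against Theorem \ref{thm:H-summand}, which already guarantees at least one $\bbH$ summand whenever $n \geq 2$.
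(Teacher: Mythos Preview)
Your argument is correct and essentially identical to the paper's own proof: both compute $Q_0(f)\cong k[x]/(x^n)$ with $E=ax^{n-1}$, choose $\phi$ vanishing on $1,\dots,x^{n-2}$ with $\phi(x^{n-1})=a^{-1}$, and recognize the resulting anti-diagonal Gram matrix via the paragraph preceding the lemma. Your explicit remark that $\langle a^{-1}\rangle=\langle a\rangle$ is a nice extra detail the paper leaves implicit.
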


\begin{proof}
The local ring $Q_0(f) \cong k[x]/(x^n)$ has $\{1,x, \ldots, x^{n-1}\}$ as a $k$-basis and $E=ax^{n-1}$. Define $\phi$ by $\phi(x^{n-1}) = a^{-1}$ and $\phi(x^i) = 0$ for $i < n-1$. The matrix of the bilinear form $\beta_\phi$ is easily seen to be the anti-diagonal matrix
\begin{equation}
    \begin{pmatrix}
    0 & \cdots & 0 & a^{-1} \\ 
    0 & \cdots & a^{-1} & 0 \\
    \vdots & & \vdots \\
    a^{-1} & \cdots & 0 & 0
    \end{pmatrix},
\end{equation}
which is congruent to the diagonal form claimed in the lemma. 
\end{proof}

\begin{remark}
Lemma \ref{lem:1dim} gives a map $f : \bbA^1 \to \bbA^1$ in one variable, say $f(x_1) \in k[x_1]$, that yields the EKL form $\frac{n}{2}\bbH$ or $\frac{n-1}{2}\bbH \oplus \langle a \rangle$. We can extend $f$ to a map $g : \bbA^n \to \bbA^n$ for any $n\geq 2$ as follows: take $g_1 = f(x_1)$ and $g_i = x_i$ for all $2\leq i\leq n$. It is straightforward to verify that $w_0(f) \cong w_0(g)$, so although $w_0(g)$ arises from a map of $n$-space, it is effectively a one dimensional example. We give a general procedure for describing when an EKL form can be realized as an EKL form using fewer variables in Theorem \ref{thm:dimension_reduction}. 
\end{remark}

The next lemma is a generalization of Lemma \ref{lem:2dim-1}.

\begin{lemma}
\label{lem:2dim-2}
Let $n$ be an even natural number with $n\geq 2$ and choose units $a, b \in k^{\times}$. The map $f : \bbA^2 \to \bbA^2$ given by $f_1 = xy$, $f_2 = ay^n - b x^2$ has EKL class $w_0(f) = \frac{n}{2}\bbH \oplus \langle a, b \rangle$. 
\end{lemma}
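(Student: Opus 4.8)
The plan is to follow the same Gröbner-basis recipe used in Lemma~\ref{lem:2dim-1}, now with the extra monomial $y^n$ in place of $y^2$. First I would fix the graded reverse lexicographic order on $k[x,y]$ with $x>y$ and compute the $S$-polynomial of $f_1 = xy$ and $f_2 = ay^n - bx^2$. Since the leading term of $f_1$ is $xy$ and that of $f_2$ is $-bx^2$, the $S$-polynomial is $x f_1 \cdot (\text{unit}) - y f_2 \cdot(\text{unit})$, which after clearing should reduce to a multiple of $y^{n+1}$; one checks $(xy,\ ay^n - bx^2,\ y^{n+1})$ is a Gröbner basis for $(f_1,f_2)$. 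From the leading terms $xy$, $x^2$, $y^{n+1}$ one reads off that the standard monomials are $\{1, y, y^2, \ldots, y^n, x\}$, so $Q_0(f)$ has dimension $n+2$ over $k$, consistent with the claimed rank $n+2$ of $\tfrac{n}{2}\bbH \oplus \langle a,b\rangle$.

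Next I would identify the distinguished socle element. Writing $f_1 = y\cdot x$ and $f_2 = (ay^{n-1})\cdot y + (-bx)\cdot x$, the matrix $(a_{ij})$ can be taken as $\begin{pmatrix} 0 & y \\ -bx & ay^{n-1}\end{pmatrix}$ (ordering variables $x$ then $y$ appropriately), whose determinant is $bxy + $ terms, but modulo the ideal one should land on $E = (\text{unit})\,y^n$; I expect $E = a\,y^n$ after a short computation, paralleling $E = by^2$ in Lemma~\ref{lem:2dim-1}. Then choose $\phi$ with $\phi(y^n) = a^{-1}$ and $\phi$ vanishing on $1, y, \ldots, y^{n-1}, x$.

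The key computational step is to assemble the Gram matrix of $\beta_\phi$ in the ordered basis $\{1, y, y^2, \ldots, y^n, x\}$ and recognize its isometry class. Products $y^i\cdot y^j = y^{i+j}$ pair nontrivially under $\phi$ exactly when $i+j = n$, giving an $(n+1)\times(n+1)$ anti-diagonal block with constant entry $a^{-1}$ on the antidiagonal; by the remark preceding Lemma~\ref{lem:2dim-1} this block is isometric to $\tfrac{n}{2}\bbH \oplus \langle a\rangle$ since $n$ is even, hence $n+1$ is odd. The remaining basis vector $x$ satisfies $x^2 \equiv (b/a)\, y^n$ modulo the ideal (from $f_2 = 0$), so $\beta_\phi(x) = \phi(x^2) = (b/a)\cdot a^{-1} = b/a^2$, which represents $b$ in $\GW(k)$; and $x\cdot y^i = $ a monomial involving $x$ that is a standard monomial only if it is $x$ itself, i.e.\ $xy^i$ is in the ideal for $i\geq 1$, so $x$ is $\beta_\phi$-orthogonal to all the $y^j$. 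Thus $\beta_\phi \cong \left(\tfrac{n}{2}\bbH \oplus \langle a\rangle\right) \oplus \langle b\rangle = \tfrac{n}{2}\bbH \oplus \langle a, b\rangle$, as claimed.

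The main obstacle is bookkeeping rather than conceptual: one must be careful that $xy^i$ really does lie in $(f_1,f_2)$ for all $i \geq 1$ (so that $x$ is orthogonal to $y,\dots,y^n$ and that the basis is as claimed), and one must correctly reduce $x^2$ and $\det(a_{ij})$ modulo the ideal to pin down both $\beta_\phi(x)$ and the socle generator $E$ up to the right unit — an error in either unit would change the final diagonal entries. Everything else is the anti-diagonal-matrix identity already recorded before Lemma~\ref{lem:2dim-1}, applied to the odd size $n+1$.
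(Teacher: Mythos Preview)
Your approach is the same Gr\"obner-basis-and-anti-diagonal-block recipe the paper uses, and your argument does go through, but two slips need fixing. First, in grevlex with $x>y$ and $n\geq 4$ the leading term of $f_2=ay^n-bx^2$ is $ay^n$, not $-bx^2$, since $y^n$ has higher total degree; your $S$-polynomial $\propto y^{n+1}$ and your standard monomials $\{1,y,\dots,y^n,x\}$ are correct for the \emph{lexicographic} order with $x>y$, not grevlex. The paper keeps grevlex, obtains $S(f_1,f_2)=\tfrac{b}{a}x^3$, and works with the basis $\{1,x,x^2,y,\dots,y^{n-1}\}$ and $\phi(x^2)=b^{-1}$, which produces a $3\times 3$ anti-diagonal block (contributing $\bbH\oplus\langle b\rangle$) and an $(n-1)\times(n-1)$ anti-diagonal block (contributing $\tfrac{n-2}{2}\bbH\oplus\langle a\rangle$), rather than your single $(n+1)\times(n+1)$ block plus a $1\times 1$ block. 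Either basis is fine; just state the order correctly. Second, from $ay^n=bx^2$ you get $x^2=(a/b)y^n$, so $\phi(x^2)=(a/b)\cdot a^{-1}=b^{-1}$, not $b/a^{2}$; both lie in the square class of $b$, so your conclusion $\langle b\rangle$ is unaffected.
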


\begin{proof}
We give a proof for $n\geq 4$ as Lemma \ref{lem:2dim-1} handles the case $n=2$. A Gr{\"o}bner basis is determined by calculating $S$-polynomials, and we must only add $S(f_1, f_2) = \frac{b}{a}x^3$ to the set $\{f_1, f_2\}$ to get a Gr{\"o}bner basis of the ideal $(f_1,f_2)$. We have $Q_0(f) \cong k[x,y]/(f_1, f_2)$ as the latter ring is a local ring, which is easily seen because both $x^3$ and $y^{n+1}$ are in the ideal $(f_1, f_2)$. A $k$-basis for $Q_0(f)$ is then given by $\{1, x, x^2, y, y^2, \ldots, y^{n-1} \}$. We calculate $E = ay^n \equiv bx^2$ in $Q_0(f)$. Hence we define $\phi$ so that $\phi(x^2) = b^{-1}$  and require $\phi$ to vanish on the other elements in our chosen basis. The resulting matrix of the bilinear form $\beta_\phi$ consists of two blocks: a $3\times 3$ block that is anti-diagonal with each anti-diagonal term equal to $b^{-1}$ and an $n-1 \times n-1$ block that is anti-diagonal with each anti-diagonal entry equal to $a^{-1}$. It is now straightforward to see that after diagonalizing this matrix, the resulting quadratic form is the one claimed in the lemma. 
\end{proof}

\begin{lemma}
\label{lem:n-m-powers}
For any units $a, b \in k^{\times}$, the map $f : \bbA^2 \to \bbA^2$ given by $f_1 = xy$ and $f_2 = - bx^n + a y^m$ with $n \geq m \geq 2$ and both $n$ and $m$ even has EKL class $w_0(f) = \frac{m}{2}\bbH \oplus \frac{n-2}{2}\bbH \oplus \langle a, b \rangle$.
\end{lemma}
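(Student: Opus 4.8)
The plan is to run the same machine as in Lemmas~\ref{lem:2dim-1} and~\ref{lem:2dim-2}: compute a Gr\"obner basis of $(f_1,f_2)$, read off a monomial $k$-basis of $Q_0(f)$, locate the distinguished socle element $E$, choose a convenient functional $\phi$ with $\phi(E)=1$, and diagonalize the Gram matrix of $\beta_\phi$. Using the graded reverse lexicographic order on $k[x,y]$ with $x>y$, the leading terms of $f_1=xy$ and $f_2=-bx^n+ay^m$ are $xy$ and $x^n$ (the hypothesis $n\ge m$ ensures $x^n$ wins, and in the boundary case $n=m$ one still has $x^n>y^n$ in this order). A short computation gives $S(f_1,f_2)=\tfrac{a}{b}y^{m+1}$, and Buchberger's criterion closes the process: $S(f_1,y^{m+1})$ reduces to $0$, while the leading monomials $x^n$ and $y^{m+1}$ are coprime. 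Hence $(xy,\,-bx^n+ay^m,\,y^{m+1})$ is a Gr\"obner basis with leading-term ideal $(xy,x^n,y^{m+1})$, and the standard monomials give the $k$-basis $\{1,x,x^2,\dots,x^{n-1},y,y^2,\dots,y^m\}$ of $k[x,y]/(f_1,f_2)$. From $y^{m+1}\in(f_1,f_2)$ together with $bx^{n+1}=ay^{m-1}f_1-xf_2$ we see that $x$ and $y$ are nilpotent in the quotient, so it is artinian local with maximal ideal $(x,y)$ and therefore equals $Q_0(f)$.

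Next I would pin down $E$. Writing $f_1=y\cdot x$ and $f_2=(-bx^{n-1})\cdot x+(ay^{m-1})\cdot y$, the defining determinant is $E=y\cdot ay^{m-1}-0\cdot(-bx^{n-1})=ay^m$ in $Q_0(f)$ (equivalently $E=bx^n$, since $ay^m-bx^n=f_2=0$). One checks $E\cdot x=ay^{m-1}(xy)=0$ and $E\cdot y=ay^{m+1}=0$, so $E$ indeed generates the socle. I then take $\phi\colon Q_0(f)\to k$ with $\phi(y^m)=a^{-1}$ and $\phi=0$ on every other basis vector, so that $\phi(E)=1$.

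Finally, using the relations $xy=0$, $x^{n+1}=0$, $y^{m+1}=0$ and $bx^n=ay^m$ in $Q_0(f)$, I expect the Gram matrix of $\beta_\phi$ to split orthogonally into two anti-diagonal blocks. On $\{1,y,y^2,\dots,y^m\}$ one gets $\phi(y^iy^j)=a^{-1}$ exactly when $i+j=m$ (the case $i=0$ included, which is why the constant $1$ belongs to this block), i.e.\ the $(m+1)\times(m+1)$ anti-diagonal matrix with all anti-diagonal entries $a^{-1}$; on $\{x,x^2,\dots,x^{n-1}\}$ one gets $\phi(x^ix^j)=b^{-1}$ exactly when $i+j=n$, i.e.\ the $(n-1)\times(n-1)$ anti-diagonal matrix with all anti-diagonal entries $b^{-1}$; and all mixed products vanish because $xy=0$ while $\phi$ annihilates every power of $x$. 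Since $n$ and $m$ are even, $m+1$ and $n-1$ are odd, so by the discussion of anti-diagonal Gram matrices preceding Lemma~\ref{lem:2dim-1} these blocks are isometric to $\tfrac{m}{2}\bbH\oplus\langle a^{-1}\rangle$ and $\tfrac{n-2}{2}\bbH\oplus\langle b^{-1}\rangle$; since $\langle a^{-1}\rangle=\langle a\rangle$ and $\langle b^{-1}\rangle=\langle b\rangle$ in $\GW(k)$, this gives $w_0(f)=\tfrac{m}{2}\bbH\oplus\tfrac{n-2}{2}\bbH\oplus\langle a,b\rangle$. I do not anticipate a genuine obstacle: the only points requiring care are the tie-break in the monomial order when $n=m$ and the bookkeeping that puts the constant $1$ into the $y$-block (so that it has odd size $m+1$ rather than $m$), which is exactly what produces the extra $\langle a\rangle$ summand, symmetrically with $\langle b\rangle$ coming from the $x$-block of odd size $n-1$.
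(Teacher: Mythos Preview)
Your proposal is correct and follows exactly the approach the paper intends: the paper's own proof simply reads ``The same line of argument used in the proofs of Lemmas~\ref{lem:2dim-1} and~\ref{lem:2dim-2} yields the result,'' and you have carefully written out that argument in full, with the Gr\"obner basis, the monomial basis, the identification $E=ay^m=bx^n$, and the block-anti-diagonal Gram matrix all matching what those earlier lemmas do. Your attention to the tie-break when $n=m$ and to placing $1$ in the $y$-block is exactly the bookkeeping needed.
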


\begin{proof}
The same line of argument used in the proofs of Lemmas \ref{lem:2dim-1} and \ref{lem:2dim-2} yields the result. 
\end{proof}

\begin{lemma}
\label{lem:Nth-power}
Let $f : \bbA^n \to \bbA^n$ be a map with an isolated zero at $0$, and suppose that $\dim_k Q_0(f) = N \geq 1$. Then the ideal $(f_1,..., f_n)$ in $P_0=k[x_1,...,x_n]_{(x_1,...,x_n)}$ contains $(x_1,...,x_n)^N$. 
The one variable case shows that $N=M$ is the smallest exponent that satisfies $(x_1,...,x_n)^M \subseteq (f_1\ldots,f_n)$ in general. 
\end{lemma}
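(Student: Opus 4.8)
The plan is to recognize $Q_0(f)$ as an Artinian local $k$-algebra and to bound the nilpotency index of its maximal ideal by a length count. Write $\bar{\frakm}$ for the image of $\frakm=(x_1,\dots,x_n)$ in $Q_0(f)=P_0/(f_1,\dots,f_n)$. Since $P_0$ is local with maximal ideal $\frakm$, the quotient $Q_0(f)$ is local with maximal ideal $\bar{\frakm}$, and by hypothesis $\dim_k Q_0(f)=N<\infty$, so $Q_0(f)$ is Artinian. It suffices to prove $\bar{\frakm}^N=0$, since this is equivalent to the asserted inclusion $\frakm^N\subseteq(f_1,\dots,f_n)P_0$.

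First I would show that the descending chain $Q_0(f)\supseteq\bar{\frakm}\supseteq\bar{\frakm}^2\supseteq\cdots$ strictly decreases until it reaches $0$. If $\bar{\frakm}^{j+1}=\bar{\frakm}^j$ for some $j$, then, applying Nakayama's lemma to the finitely generated $Q_0(f)$-module $\bar{\frakm}^j$ (which satisfies $\bar{\frakm}\cdot\bar{\frakm}^j=\bar{\frakm}^j$), we get $\bar{\frakm}^j=0$. Hence there is a least integer $s\ge 0$ with $\bar{\frakm}^s=0$, and the inclusions $\bar{\frakm}^i\supsetneq\bar{\frakm}^{i+1}$ are strict for $0\le i\le s-1$.

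The dimension count then finishes it. From the filtration $Q_0(f)=\bar{\frakm}^0\supseteq\bar{\frakm}^1\supseteq\cdots\supseteq\bar{\frakm}^s=0$ we obtain
\[
N=\dim_k Q_0(f)=\sum_{i=0}^{s-1}\dim_k\bigl(\bar{\frakm}^i/\bar{\frakm}^{i+1}\bigr)\ge s,
\]
because each summand is at least $1$ by strictness; hence $\bar{\frakm}^N\subseteq\bar{\frakm}^s=0$. (For $N=1$ one already has $\bar{\frakm}=0$, and the statement reads $\frakm\subseteq(f_1,\dots,f_n)P_0$.) I do not expect a real obstacle here: the only points requiring care are that $Q_0(f)$ is genuinely local, so that taking powers of the single maximal ideal is the right move, and the applicability of Nakayama's lemma, both of which are immediate from the setup.

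For the closing sharpness remark I would invoke the one-variable computation of Lemma \ref{lem:1dim}: for $f\colon\bbA^1\to\bbA^1$ given by $f(x)=x^N$ we have $Q_0(f)\cong k[x]/(x^N)$ of $k$-dimension $N$, while $\frakm^{N-1}=(x^{N-1})$ is plainly not contained in $(x^N)$. So no exponent $M<N$ can satisfy $\frakm^M\subseteq(f_1,\dots,f_n)P_0$ in general, and the bound $N$ is best possible.
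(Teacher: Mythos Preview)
Your proof is correct and follows a different route than the paper's. You argue abstractly: $Q_0(f)$ is an Artinian local ring, Nakayama forces the powers $\bar{\frakm}^i$ to strictly decrease until they vanish, and then a length count bounds the nilpotency index by $N$. The paper instead argues concretely monomial by monomial: for any degree-$N$ monomial $x^{\alpha}=x_{i_1}\cdots x_{i_N}$ the $N+1$ elements $1,\,x_{i_1},\,x_{i_1}x_{i_2},\,\ldots,\,x^{\alpha}$ must be linearly dependent in $Q_0(f)$, and the lowest nonzero coefficient in the resulting relation lets one factor out a unit and conclude that some initial subproduct $x_{i_1}\cdots x_{i_m}$ already vanishes in $Q_0(f)$, whence $x^{\alpha}$ does too.

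Your approach is the standard commutative-algebra packaging and has the virtue of working verbatim for any finite-length local ring; the paper's argument is more elementary in that it avoids Nakayama entirely, trading it for an explicit linear-dependence trick. Both yield the same sharpness statement via the one-variable example $f(x)=x^N$.
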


\begin{proof}
Let $x^{\alpha} = x_{i_1}x_{i_2}\cdots x_{i_N}$ be a monomial of degree $N$, where the indices may be repeated and $1\leq i_j \leq n$. The set $\{ 1, x_{i_1}, x_{i_1}x_{i_2}, \ldots, x^{\alpha} \}$ must be linearly dependent in $Q_0(f)$, as it contains $N+1$ elements. Hence there is some linear relation $\sum_{j=0}^N a_j x_{i_1}\cdots x_{i_j} = 0$ in $Q_0(f)$. Let $m$ be the smallest index for which $a_m$ is nonzero. We must have $m \geq 1$ as $\dim Q_0(f) \geq 1$. Then 
\begin{align}
x_{i_1}\cdots x_{i_m}\left(\sum_{j= m}^N a_j \frac{x^{\alpha}}{x_{i_1}\cdots x_{i_m}}\right) = 0
\end{align}
implies that $x_{i_1}\cdots x_{i_m} = 0 $ in $Q_0(f)$, as the other factor is a unit in the ring $k[x_1,...,x_n]_{(x_1,...,x_n)}$. The result follows from this observation. 
\end{proof}

\begin{lemma}
\label{lem:Nth_power_diff}
Let $f: \bbA^n \to \bbA^n$ be a map with an isolated zero at $0$ and suppose $\dim_k Q_0(f) = N$. If the map $g : \bbA^n \to \bbA^n$ satisfies $f_i - g_i \in (x_1,\ldots, x_n)^{N+1}$ in $P_0$ for all $i$, then the ideals $(f_1,...,f_n)$ and $(g_1,...,g_n)$ are equal in $P_0$ and the EKL classes of $f$ and $g$ agree, i.e., $w_0(f) = w_0(g)$. 
\end{lemma}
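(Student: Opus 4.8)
The plan is to establish the ideal equality first, via Nakayama's lemma and Lemma~\ref{lem:Nth-power}, and then to check that the distinguished socle elements of $f$ and $g$ coincide in the common quotient ring, so that one and the same linear functional computes both EKL forms. We may assume $N\geq 1$: if $N=0$ then $Q_0(f)=0$, the argument below still forces $Q_0(g)=0$, and both EKL forms are the empty form.

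Write $\frakm=(x_1,\dots,x_n)P_0$, $I=(f_1,\dots,f_n)$ and $J=(g_1,\dots,g_n)$. By Lemma~\ref{lem:Nth-power} we have $\frakm^{N}\subseteq I$, hence $\frakm^{N+1}\subseteq\frakm I$. Since $f_i-g_i\in\frakm^{N+1}\subseteq\frakm I$ for each $i$, on the one hand $g_i=f_i-(f_i-g_i)\in I$, so $J\subseteq I$; on the other hand $f_i=g_i+(f_i-g_i)\in J+\frakm I$, so $I=J+\frakm I$. As $P_0$ is a Noetherian local ring and $I$ is finitely generated, Nakayama's lemma gives $I=J$. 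In particular $Q_0(g)=P_0/J=P_0/I=Q_0(f)$, so $\dim_k Q_0(g)=N$ and the EKL form $w_0(g)$ is defined.

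For the socle elements, $N\geq 1$ forces every $f_i$ (and every $g_i$) to lie in $\frakm$, so we may write $f_i=\sum_j a_{ij}x_j$ with $a_{ij}\in P$. Expanding $f_i-g_i\in\frakm^{N+1}$ as a $P_0$-combination of degree-$(N+1)$ monomials and extracting one variable from each monomial, we obtain $f_i-g_i=\sum_j h_{ij}x_j$ with $h_{ij}\in\frakm^{N}$, hence $g_i=\sum_j b_{ij}x_j$ with $b_{ij}=a_{ij}-h_{ij}$. Expanding the determinant by multilinearity, $\det(b_{ij})-\det(a_{ij})$ is a sum of products each of which contains at least one factor $h_{ij}\in\frakm^{N}$, so it lies in $\frakm^{N}\subseteq I=J$. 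Since the distinguished socle element of $g$ is the class of $\det(b_{ij})$ modulo $J$, it equals the class of $\det(a_{ij})$ modulo $I$, namely $E_0(f)$. Thus $E_0(f)=E_0(g)$ as elements of $Q_0(f)=Q_0(g)$, so any $k$-linear $\phi$ with $\phi(E_0(f))=1$ simultaneously defines $\beta_\phi$ for $f$ and for $g$ as literally the same bilinear form on the same vector space; hence $w_0(f)=w_0(g)$.

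The genuinely delicate point is the last comparison rather than the ideal equality. Because rescaling the defining functional $\phi$ rescales $\beta_\phi$ by a unit—and hence, for rank-$1$ forms, alters its isometry class—it is not enough to know that $E_0(f)$ and $E_0(g)$ generate the same one-dimensional socle; one must use that both are produced by the canonical determinant construction and verify that these canonical representatives agree on the nose modulo $I=J$, which is exactly what the determinant expansion supplies. Everything else is routine commutative algebra.
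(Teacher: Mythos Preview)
Your proof is correct and follows essentially the same approach as the paper's: Nakayama's lemma (together with Lemma~\ref{lem:Nth-power}) for the ideal equality, and the observation that the matrices $(a_{ij})$ and $(b_{ij})$ agree modulo $\frakm^N$ for the equality of distinguished socle elements. Your version is more detailed and self-contained, whereas the paper summarizes the argument by reference to \cite[Lemma~17]{KassWickelgren}.
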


\begin{proof}
This result is essentially \cite[Lemma 17]{KassWickelgren}. To summarize their argument, the ideal $(f_1,...,f_n)$ is shown to be equal  to $(g_1,...,g_n)$ by using Nakayama's lemma; hence $Q_0(f)=Q_0(g)$. They then verify that the distinguished socle elements $E(f)$ and $E(g)$ are equal as follows. Write $g_i = \sum a_{ij}(g) x_j$ so that $E(g) = \det (a_{ij}(g))$, then the assumption that $f_i - g_i \in (x_1,\ldots x_n)^{N+1}$ implies that the matrices $(a_{ij}(f))$ and $(a_{ij}(g))$ agree modulo $(x_1,\ldots,x_n)^N $, hence $E(f)$ and $E(g)$ agree. It now follows that the EKL classes of $f$ and $g$ agree by the discussion following Definition \ref{def:EKL}.
\end{proof}

\begin{lemma}
\label{lem:linear-combination}
Let $f=(f_1,\ldots,f_n) : \bbA^n \to \bbA^n$ be a map with an isolated zero at $0$. Then for any polynomial $h \in k[x_1,\ldots, x_n]$ and any distinct indices $i$ and $j$, the polynomial maps $f$ and $f' = (f_1,\ldots, f_j + h\cdot f_i, \ldots, f_n)$ give isomorphic EKL classes.
\end{lemma}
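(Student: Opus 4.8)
The plan is to show that $f$ and $f'$ determine the \emph{same} quotient ring and the \emph{same} distinguished socle element, so that a single linear functional computes both EKL forms simultaneously.

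First I would observe that the ideals coincide: $(f_1,\ldots,f_n) = (f_1,\ldots,f_j + h f_i,\ldots,f_n)$ as ideals of $P$, since passing from the first list of generators to the second (and back) is an invertible operation. Hence these ideals also agree in $P_0$, and $Q_0(f') = Q_0(f)$ as $k$-algebras; in particular $f'$ again has an isolated zero at $0$. Since $f$ has a zero at $0$ we have $f_k(0) = 0$ for all $k$ (in the only non-trivial case $\dim_k Q_0(f) \geq 1$, as otherwise the ideal contains a unit of $P_0$ and the EKL class is trivial), so $f'_k(0) = 0$ for all $k$ as well, and the maximal ideal of $Q_0(f') = Q_0(f)$ is still $\frakm = (x_1,\ldots,x_n)$.

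Next I would compare the socle elements. Fix polynomials $a_{kl} \in P$ with $f_k = \sum_{l} a_{kl} x_l$, so that $E(f)$ is the class of $\det(a_{kl})$ in $Q_0(f)$. Put $a'_{kl} = a_{kl}$ for $k \neq j$ and $a'_{jl} = a_{jl} + h\,a_{il}$; then $f'_k = \sum_{l} a'_{kl} x_l$ for every $k$, so $E(f')$ is the class of $\det(a'_{kl})$. The matrix $(a'_{kl})$ is obtained from $(a_{kl})$ by adding $h$ times row $i$ to row $j$, which is legitimate since $i \neq j$, and such an elementary row operation preserves the determinant over any commutative ring. Therefore $\det(a'_{kl}) = \det(a_{kl})$ in $P$, and hence $E(f') = E(f)$ in $Q_0(f') = Q_0(f)$.

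Finally, choose a $k$-linear map $\phi : Q_0(f) \to k$ with $\phi(E(f)) = 1$; by the previous step this same $\phi$ also satisfies $\phi(E(f')) = 1$ on $Q_0(f') = Q_0(f)$. The associated symmetric bilinear form $\beta_\phi(x,y) = \phi(xy)$ depends only on the common ring $Q_0(f) = Q_0(f')$ and on $\phi$, so it computes both $w_0(f)$ and $w_0(f')$; hence $w_0(f) = w_0(f')$ in $\GW(k)$. There is no real obstacle here. The only point requiring a little care is the non-uniqueness of the expression $f_k = \sum_l a_{kl} x_l$: one must fix such an expression for $f$ first and build the expression for $f'$ from it, so that $\det(a'_{kl})$ and $\det(a_{kl})$ are literally determinants of two matrices differing by a single row operation, rather than separately invoking well-definedness of the socle element.
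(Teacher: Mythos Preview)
Your proof is correct and follows essentially the same approach as the paper: show that $Q_0(f)=Q_0(f')$ because the ideals coincide, and then verify $E(f)=E(f')$ by observing that the defining matrices differ by a single elementary row operation (adding $h$ times row $i$ to row $j$), which preserves the determinant. Your added remark about fixing one expression $f_k=\sum_l a_{kl}x_l$ and building the expression for $f'$ from it is a nice point of care that the paper leaves implicit.
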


\begin{proof}
It is clear that the maps $f$ and $f'$ define the same local rings $Q_0(f) = Q_0(f')$, so it is only a matter of verifying that the maps $f$ and $f'$ yield the same distinguished socle element $E$. If we have $f_i = \sum a_{ij}x_j$ so that $E(f) = \det (a_{ij})$, then $E(f')$ is determined by first performing the elementary row operation of adding $h$ times the $i$th row to the $j$th row to the matrix $(a_{ij})$ and then taking the determinant. The elementary row operation does not affect the determinant, so the distinguished socle elements of $f$ and $f'$ are equal. Thus the EKL classes for $f$ and $f'$ agree. 
\end{proof}

\begin{lemma}
\label{lem:linear-composition}
Let \( f = (f_1, \ldots, f_n) \colon \bbA^n \to \bbA^n \) be a map with an isolated zero at \(0\) and let \( A \colon \bbA^n \to \bbA^n \) be a linear isomorphism. Then \(Q_0(f) \cong Q_0(A\circ f) \) and the EKL classes of \(f\) and \(A \circ f\) differ by multiplication by $\det(A)$. 

\end{lemma}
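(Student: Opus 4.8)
The plan is to reduce everything to a direct matrix computation. Since $A$ is a linear isomorphism of $\bbA^n$ it fixes the origin and is given by $A(x)=Mx$ for an invertible matrix $M$ over $k$, so that $(A\circ f)_i = \sum_{j} M_{ij}f_j$. Because $M$ is invertible, the tuples $(f_1,\ldots,f_n)$ and $((A\circ f)_1,\ldots,(A\circ f)_n)$ generate the same ideal in $P_0$; hence $Q_0(A\circ f) = Q_0(f)$ as $k$-algebras, and in particular $A\circ f$ again has an isolated zero at $0$. This disposes of the first assertion.

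Next I would track the distinguished socle element. Choose expressions $f_i = \sum_{j} a_{ij}x_j$ with $a_{ij}\in P$, so that $E(f)$ is the image of $\det(a_{ij})$ in $Q_0(f)$. Substituting gives $(A\circ f)_i = \sum_j b_{ij}x_j$ with $(b_{ij}) = M\cdot(a_{ij})$ as matrices over $P$. Taking determinants and passing to $Q_0(f)=Q_0(A\circ f)$ yields $E(A\circ f) = \det(M)\,E(f)$, where $\det(M)\in k^\times$ is a nonzero scalar. Here one uses that $E$ is well defined independently of the chosen $a_{ij}$, as recalled after Definition \ref{def:EKL}.

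Finally, to compare the bilinear forms I would start with a $k$-linear $\phi : Q_0(f)\to k$ with $\phi(E(f))=1$ and set $\psi := \det(M)^{-1}\phi$. Then $\psi(E(A\circ f)) = \det(M)^{-1}\phi(\det(M)E(f)) = 1$, so $\psi$ is an admissible functional for $A\circ f$, and $\beta_\psi(x,y) = \psi(xy) = \det(M)^{-1}\beta_\phi(x,y)$. Hence $w_0(A\circ f) = \langle \det(M)^{-1}\rangle\cdot w_0(f)$ in $\GW(k)$. Since $\langle u\rangle = \langle u^{-1}\rangle$ for every $u\in k^\times$ (the two rank one forms differ by the square $u^2$), this equals $\langle \det(A)\rangle\cdot w_0(f)$, which is the claimed statement.

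I do not expect a genuine obstacle here: the argument is a bookkeeping exercise. The only points demanding a little care are checking that $E$ scales by exactly $\det(M)$ (rather than its inverse), which follows from the left multiplication $(b_{ij})=M(a_{ij})$, and the harmless square-class identification $\langle u\rangle = \langle u^{-1}\rangle$ needed to phrase the conclusion as multiplication by $\det(A)$ itself.
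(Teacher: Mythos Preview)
Your proof is correct and follows essentially the same approach as the paper: both show the ideals agree, compute $E(A\circ f)=\det(A)\,E(f)$ via the matrix product, and rescale the functional by $\det(A)^{-1}$. Your extra remark that $\langle u\rangle=\langle u^{-1}\rangle$ in $\GW(k)$ makes explicit the identification between $\langle\det(A)^{-1}\rangle\cdot w_0(f)$ and $\langle\det(A)\rangle\cdot w_0(f)$, which the paper leaves implicit.
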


\begin{proof} 
Express $A$ as \( A = (a_{11}x_1 + \ldots + a_{1n}x_n, \ldots, a_{n1}x_1 + \ldots + a_{nn}x_n)\) with $a_{ij}\in k$ so that 
 \begin{equation*}
 A \circ f = \left( \sum_{\ell=1}^n a_{1\ell}f_\ell, \ldots, \sum_{\ell=1}^n a_{n\ell}f_\ell \right).
 \end{equation*}
It is clear that the ideals $(f_1,...,f_n)$ and  $\left( \sum_{\ell=1}^n a_{1\ell}f_\ell, \ldots, \sum_{\ell=1}^n a_{n\ell}f_\ell \right)$ agree in $P$; hence the local rings $Q_0(f)$ and $Q_0(A\circ f)$ are equal. We now verify that the distinguished socle elements of $f$ and $A\circ f$ at $0$ differ by $\det(A)$: 

Let $b_{ij}\in P$ be polynomials such that $f_i(x) = \sum_{j=1}^n b_{ij}x_j$ for each $i$.  
Composing $f$ with $A$ then yields 
 \begin{equation*}
 (A \circ f)_i = \sum_{\ell=1}^n a_{i\ell}f_\ell = \sum_{\ell=1}^n a_{i\ell} \sum_{j=1}^n b_{\ell j}x_j = \sum_{j=1}^n \left( \sum_{\ell=1}^n a_{i\ell}b_{\ell j} \right) x_j.
 \end{equation*}
Since we have \( E(f) = \det(b_{ij}) \) by definition of $E$, it follows that \( E(A \circ f) = \det(A \cdot (b_{ij})) = \det(A) E(f). \) Note that \( \det(A) \in k^{\times} \) is a unit. 
Now, given a \(k\)-linear function \( \phi \colon Q_0(f) \to k \) so that \( \phi(E(f)) = 1 \), define \( \Tilde{\phi} \colon Q_0(A \circ f) \to k \) by \( g \mapsto \det(A)^{-1} \phi(g) \). This is \(k\)-linear and maps \( E(A \circ f) \) to 1, so we can use it to compute the EKL-class of \( A \circ f \), and we get \( w_0(A \circ f) = \det(A)^{-1} w_0(f) \).
\end{proof}

We are now ready to prove a key technical result that will allow us to reduce dimensions in the following sections. 

\begin{theorem}
\label{thm:dimension_reduction}
Suppose $f : \bbA^n \to \bbA^n$ is a map with an isolated zero at $0$ and suppose that the rank of $Q_0(f)$ is $N\geq 1$. If the ideal $(f_1,...,f_n)$ is not contained in $(x_1,...,x_n)^2\subseteq k[x_1,...,x_n]$, then we can eliminate a variable in the description of $Q_0(f)$. That is, there is a map $g : \bbA^{n-1} \to \bbA^{n-1}$ with isolated zero at $0$ and $Q_0(g)\cong Q_0(f)$. Furthermore, the EKL classes of $f$ and $g$ differ only by multiplication by a unit. 
\end{theorem}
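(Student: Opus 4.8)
The plan is to exploit the hypothesis that some $f_i$ has a nonzero linear term, and use the linear-algebra lemmas already established (Lemmas \ref{lem:linear-combination} and \ref{lem:linear-composition}) to normalize $f$ so that one of the coordinate functions literally becomes a variable, say $f_n = x_n$; then quotienting by $f_n$ visibly eliminates the variable $x_n$.

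First I would write $f_i(x) = \ell_i(x) + (\text{higher order})$ with $\ell_i$ the linear part. The assumption that $(f_1,\dots,f_n)\not\subseteq \frakm^2$ means exactly that some $\ell_i$ is nonzero, and after reindexing we may assume $\ell_n \neq 0$. After a linear change of coordinates in the \emph{target} (i.e.\ replacing $f$ by $A\circ f$ for a suitable invertible $A\in \mathrm{GL}_n(k)$, which by Lemma \ref{lem:linear-composition} only scales $w_0(f)$ by the unit $\det A$), we may arrange that the linear parts $\ell_1,\dots,\ell_n$ are in ``row echelon'' form; in particular we can assume $\ell_n = c_1 x_1 + \dots + c_m x_m$ with, say, $c_m \neq 0$, and then a further linear change of \emph{source} coordinates (an automorphism of $\bbA^n$, which preserves the EKL class up to the determinant by the same circle of ideas, or can be folded into a target change on the composite) lets us assume outright $\ell_n = x_n$. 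So $f_n = x_n + h$ with $h \in \frakm^2$. Using Lemma \ref{lem:linear-combination} repeatedly — subtracting multiples of $f_n$ from $f_1,\dots,f_{n-1}$, which does not change the EKL class — I can kill every occurrence of $x_n$ from the leading behavior of the other $f_j$; more carefully, since $f_n \equiv x_n \pmod{\frakm^2}$, the element $x_n$ is congruent in $Q_0(f)$ to an element of $(x_1,\dots,x_{n-1}) + \frakm^2$, and iterating (Nakayama, exactly as in Lemma \ref{lem:Nth_power_diff}) shows $x_n \in (x_1,\dots,x_{n-1})$ inside $Q_0(f)$.

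Concretely, the cleanest route: after the normalization $f_n = x_n + h(x_1,\dots,x_n)$ with $h\in\frakm^2$, solve for $x_n$ formally. The equation $x_n = -h(x_1,\dots,x_n)$ together with the fact that $\dim_k Q_0(f) = N < \infty$ (so $\frakm^{N} \subseteq (f_1,\dots,f_n)$ by Lemma \ref{lem:Nth-power}) lets me substitute recursively: there is a polynomial $\psi(x_1,\dots,x_{n-1})$ with $x_n \equiv \psi \pmod{(f_1,\dots,f_n)}$, obtained by iterating the substitution $x_n \mapsto -h$ finitely many times (the correction terms land in higher and higher powers of $\frakm$ and vanish once we pass $\frakm^N$). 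Now set $g_j(x_1,\dots,x_{n-1}) := f_j(x_1,\dots,x_{n-1},\psi(x_1,\dots,x_{n-1}))$ for $1\le j\le n-1$. The ring map $k[x_1,\dots,x_n] \to k[x_1,\dots,x_{n-1}]$ sending $x_n\mapsto \psi$ and $x_j \mapsto x_j$ otherwise induces, after localizing, an isomorphism $Q_0(f) = P_0/(f_1,\dots,f_n) \xrightarrow{\ \sim\ } P_0'/(g_1,\dots,g_{n-1}) = Q_0(g)$, because modulo $f_n = x_n + h$ the variable $x_n$ and $\psi$ have the same image, and the relation $f_n$ is ``used up'' by the elimination. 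This gives $g:\bbA^{n-1}\to\bbA^{n-1}$ with $Q_0(g)\cong Q_0(f)$ and an isolated zero at $0$. Finally, since $\beta_\phi$ depends only on the ring $Q_0(f)$ together with the distinguished socle element $E$ up to scaling, and the isomorphism $Q_0(f)\cong Q_0(g)$ carries $E(f)$ to a nonzero socle generator of $Q_0(g)$ hence to a unit multiple of $E(g)$, the forms $w_0(f)$ and $w_0(g)$ agree up to multiplication by a unit (combining this with the $\det A$ factor picked up from the initial target change of coordinates).

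The main obstacle I expect is the careful bookkeeping in the elimination step: justifying that the recursive substitution $x_n \mapsto -h$ terminates and produces a genuine polynomial $\psi$ in the remaining variables whose image in $Q_0(f)$ equals that of $x_n$, and then checking that the induced map on quotient rings is an isomorphism (not merely surjective) — i.e.\ that no extra relations are introduced and $(g_1,\dots,g_{n-1})$ is exactly the image of $(f_1,\dots,f_n)$. This is where I would lean on Lemma \ref{lem:Nth-power} (to know $\frakm^N \subseteq (f_1,\dots,f_n)$, controlling the tail of the substitution) and on a Nakayama argument in the style of Lemma \ref{lem:Nth_power_diff}. Tracking the unit scaling the EKL class through all the coordinate changes is routine given Lemmas \ref{lem:linear-combination} and \ref{lem:linear-composition}, so I would not belabor it.
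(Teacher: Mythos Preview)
Your proposal is correct and follows essentially the same strategy as the paper: normalize so that one coordinate function has linear part equal to a single variable (via Lemmas \ref{lem:linear-composition} and \ref{lem:linear-combination}), then eliminate that variable by an iterated substitution controlled by $\frakm^N\subseteq I$ (Lemma \ref{lem:Nth-power}), and finally use that the socle is one-dimensional so the EKL class is determined up to a unit. The only minor difference is the order of operations: the paper first uses row operations to clear the distinguished variable from the remaining $f_j$'s, so that the reduced $g_j$ already lie in the smaller polynomial ring, and only then produces the substitution polynomial by downward induction on degree---this makes the verification that $Q_0(f)\to Q_0(g)$ is an isomorphism (rather than merely a surjection) slightly more direct than in your direct-substitution version.
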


\begin{proof}
Recall that we write $P$ for the polynomial ring $k[x_1,...,x_n]$, $\frakm$ for the maximal ideal $(x_1,...,x_n)$, and $P_0$ for the local ring $P$ localized at $\frakm$. We study modifications of the generators of the ideal $I = (f_1,...,f_n)$ until we are able to eliminate a variable. We can eliminate all terms of degree greater than  $N$ in the polynomials $f_1,...,f_n$ without modifying the ideal or the EKL class by Lemma \ref{lem:Nth_power_diff}. Assume this is done, and write $f_i = \sum_{j=1}^{N}f_{ij}$ where $f_{ij}$ is a homogeneous polynomial of degree $j$. 

Without loss of generality, we may assume $f_{11}$ is non-zero, as we have assumed that some $f_i$ has a non-zero linear term. Find a linear isomorphism $A : \bbA^n \to \bbA^n$ that will transform $f_{11}$ into $x_1$. The EKL classes of $f$ and $A \circ f$ differ only by multiplication of a unit by Lemma \ref{lem:linear-composition}, so we may assume $f_{11}=x_1$ from now on. 

By adding multiples of $f_1$ to the other polynomials $f_2,...,f_n$ and then removing all terms of degree greater than $N$, we may eliminate all of the monomials that are divisible by $x_1$ from $f_2,...,f_n$. Write $g_2,...,g_n$ for the resulting polynomials in $k[x_2,...,x_n]$ obtained from this procedure. It follows that $(f_1,...,f_n) = (f_1,g_2,...,g_n)$ as ideals in $P_0$ and also the EKL classes agree by Lemmas \ref{lem:linear-combination} and \ref{lem:Nth_power_diff}. 

We now produce a polynomial $h \in k[x_2,...,x_n]$ that satisfies $x_1 - h \in I$. We do this by showing that every monomial $x^{\alpha}$ that is divisible by $x_1$ admits a polynomial $h_{\alpha}\in k[x_2,...,x_n]$ for which $x^{\alpha} - h_{\alpha} \in I$. We use a downward induction argument, observing that the claim is true when the degree of $x^{\alpha}$ is at least $N$ by taking $h_{\alpha} = 0$ as $\frakm^N \subseteq I$. 
 
Assume the result is now true for all monomials $x^{\alpha}$ of total degree $i+1$ that are divisible by $x_1$ and consider $x^{\alpha}$ of total degree $i$ that is divisible by $x_1$. We can then consider the member of $I$
\begin{equation*}
    \frac{x^{\alpha}}{x_1}\cdot f_1 = \frac{x^{\alpha}}{x_1}\cdot\left(\sum_{j=1}^N f_{1j}\right) =\frac{x^{\alpha}}{x_1}f_{1,N} + \cdots + x^{\alpha}.
\end{equation*}
All of the terms except $x^\alpha$ have degree at least $i+1$. Hence the induction hypothesis may be used to replace all of the terms divisible by $x_1$ with a polynomial in $k[x_2,...,x_n]$. This then yields an expression $x_{\alpha}  - h_{\alpha} \in I$ where $h_\alpha \in k[x_2,...,x_n]$. 

The ideals $(f_1,f_2,...,f_n)$ and $(x_1-h,g_2,...,g_n)$ agree in the local ring $P_0=k[x_1,...,x_n]_{\frakm}$\footnote{As $x_1 - h \in (f_1,...,f_n)$, there is an equation $x_1-h = \sum\frac{a_i}{b_i}f_i$, where $a_i,b_i \in P$ and the $b_i$ are units. Multiply through by a common denominator; upon relabeling coefficients, we get an equation $B(x_1 - h) = \sum a_i f_i$ in $P$. Modulo $(x_2,...,x_n)$ the equation is equivalent to $Bx_1 = a_1f_1$. Since $B$ is a unit, $Bx_1$ has a non-zero linear term. Thus $a_1f_1$ must have a non-zero linear term too, and this can happen only if $a_1$ has a non-zero constant term. Thus $a_1$ is a unit in $P_0$. From this, we can finally obtain the expression $f_1 = \frac{B}{a_1}(x_1-h) - \sum_{i\neq 1} \frac{a_i}{a_1}f_i$. }. Furthermore, the maps $(f_1,...,f_n)$ and $(x_1-h,g_2,...,g_n)$ define the same distinguished socle element up to a unit in $P_0$.\footnote{It suffices to show that $(f_1,g_2,...,g_n)$ and $(x_1-h,g_2,...,g_n)$ define the same distinguished socle element up to a unit by our initial reduction step. Write out the matrices that define $E$ in both cases. Only the first row of the two matrices can be different. But the first column of both matrices has only one non-zero entry, and it is a unit because both $f_1$ and $x_1 -h$ have linear term $x_1$.} 
The evaluation map $\psi : P_0 \to k[x_2,...,x_n]_{(x_2,...,x_n)}$ defined by $\phi(x_1) = g_1$ and $\phi(x_i) = x_i$ otherwise induces an isomorphism on local rings $\phi : Q_0(f) \to Q_0(g)$, where $g=(g_2,...,g_n)$.  It is now straightforward to see that the distinguished socle element $E(x_1-h,g_2,...,g_n)$ maps under $\phi$ to $u\cdot E(g_2,...,g_n)$ for some unit $u$. Thus the EKL classes agree up to a unit and we are done. 
\end{proof}

\begin{remark}
The previous theorem is inspired by the proof of \cite[Lemma 5.7]{Mckean} and serves as a generalization of McKean's result to higher dimensions. We thank Sabrina Pauli for bringing McKean's paper to our attention.
\end{remark}

\begin{lemma}
\label{lem:intersection-multiplicity}
Let $f : \bbA^n \to \bbA^n$ be a map with isolated zero at $0$ and assume that the ideal $(f_1,...,f_n)$ is contained in $(x_1,..,x_n)^2$. Then the dimension of $Q_0(f)$ is at least $2^n$. 
\end{lemma}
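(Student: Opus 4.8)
The plan is to exploit the hypothesis $I := (f_1,\dots,f_n) \subseteq \frakm^2$, where $\frakm = (x_1,\dots,x_n)$, not only for $I$ itself but for all of its powers $I^s$, squeezing $\dim_k P_0/I^s$ between $\dim_k P_0/\frakm^{2s}$ and a multiple of $\dim_k Q_0(f)$, and then letting $s\to\infty$. In the language of intersection multiplicities this is the assertion $e(I;P_0) \ge e(\frakm^2;P_0) = 2^n$ combined with $e(I;P_0) = \dim_k Q_0(f)$, which holds because $P_0$ is Cohen--Macaulay and $I$ is a parameter ideal; but the argument can be made self-contained with only elementary bookkeeping, which is the route I would take.

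First I would record the lower bound: since each $f_i$ lies in $\frakm^2$ we have $I^s \subseteq \frakm^{2s}$, so $P_0/I^s$ surjects onto $P_0/\frakm^{2s}$, whence $\dim_k P_0/I^s \ge \dim_k P_0/\frakm^{2s} = \binom{2s+n-1}{n}$, the latter being the number of monomials of degree at most $2s-1$ in $n$ variables. Then I would record the upper bound from the $I$-adic filtration: $\dim_k P_0/I^s = \sum_{j=0}^{s-1}\dim_k (I^j/I^{j+1})$, and each $I^j/I^{j+1}$, being annihilated by $I$, is a module over $Q_0(f) = P_0/I$ generated by the $\binom{j+n-1}{n-1}$ images of the monomials $f_1^{\alpha_1}\cdots f_n^{\alpha_n}$ with $\alpha_1+\cdots+\alpha_n = j$. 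Writing $N = \dim_k Q_0(f)$, this gives $\dim_k(I^j/I^{j+1}) \le N\binom{j+n-1}{n-1}$, and summing via the hockey-stick identity, $\dim_k P_0/I^s \le N\binom{s+n-1}{n}$. In other words,
\[
\binom{2s+n-1}{n} \;\le\; \dim_k P_0/I^s \;\le\; N\binom{s+n-1}{n} \qquad \text{for every } s\ge 1.
\]

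From the outer inequality, $N \ge \prod_{i=0}^{n-1}\frac{2s+i}{s+i}$ for all $s$; since each factor is nondecreasing in $s$ and tends to $2$, the product increases to $2^n$, and therefore $N\ge 2^n$, which is the claim. I expect the only step requiring genuine care to be the upper bound: it is precisely the isolated-zero hypothesis that makes $Q_0(f)$ finite-dimensional, hence makes each graded piece $I^j/I^{j+1}$ finite-dimensional and legitimately bounded by $N$ times the number of degree-$j$ monomials in the $f_i$; everything else is manipulation of binomial coefficients and a limit. If one prefers a less hands-on route, the statement also follows directly from the standard properties of Hilbert--Samuel multiplicity recalled above.
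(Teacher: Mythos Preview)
Your argument is correct and self-contained. The paper, by contrast, gives no argument at all: it simply cites Fulton's \emph{Intersection Theory}, \S12.4 and the Notes and References on p.~234, where the inequality is stated as a property of intersection multiplicities (if each hypersurface $V(f_i)$ has multiplicity at least $m_i$ at the origin, the intersection number there is at least $\prod m_i$). Your route via the Hilbert--Samuel function is genuinely different in presentation, though morally equivalent: the Fulton reference ultimately also rests on comparing the length of $P_0/I$ to a multiplicity, and the multiplicity inequality $e(I;P_0)\ge e(\frakm^2;P_0)=2^n$ is exactly what your sandwich of binomial coefficients is proving by hand. What your approach buys is that a reader need not chase a reference or know any intersection theory; what the citation buys is brevity and a pointer to the sharper and more general statement (arbitrary multiplicities $m_i$ rather than all equal to $2$). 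Either is entirely adequate for the use the paper makes of the lemma.
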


\begin{proof}
This result follows from \cite[\S 12.4]{Fulton} and the relevant result is clearly stated on \cite[Chapter 12, Notes and References, page 234]{Fulton}.
\end{proof}

The two results above show that all EKL classes with rank less than 8 can be reduced to the planar or 2-variable case. 

\section{EKL classes of fixed rank}

We will now address the question of which quadratic forms are representable as EKL forms. This question depends on the base field $k$, but we investigate the question for a general field $k$ below. 
We have already seen that any EKL form $w_0(f)$ with rank at least 2 necessarily has a hyperbolic summand.  
We have also seen that there are rank 4 EKL classes with exactly one hyperbolic summand and also some with two hyperbolic summands by Lemma \ref{lem:2dim-1}. 
What then is the minimal number of hyperbolic forms that an EKL class of rank $n$ must contain as a summand over a general field? The next case to analyze is when the rank of $w_0(f)$ is 5.
The following table summarizes the cases we know including our main result on rank 5 of Theorem \ref{thm:rank-5}. 

\begin{table}[htbp]
    \centering
    \begin{tabular}{c|c}
         Rank & EKL form type \\
         1 & $\langle a \rangle$ \\
         2 & $\bbH$ \\
         3 & $\bbH +\langle a \rangle$ \\
         4 & $\bbH + \langle a, b \rangle$\\
         5 & $2\bbH + \langle a \rangle$ \\
         6 & ? \\
    \end{tabular}
    \caption{This is a list of the possible EKL forms of a given rank. In the EKL form type, the constants $a$ and $b$ are arbitrary units in the ground field. In particular, in the rank $4$ case, it is possible to take $b=-a$ so that the EKL class is $2\bbH$.}
    \label{tab:min_hyperbolic}
\end{table}

\begin{theorem}
\label{thm:rank-5}
Let $f : \bbA^n \to \bbA^n$ be a map with isolated zero at 0, and suppose further that the rank of $w_0(f)$ is 5. Then $w_0(f) \cong 2\bbH \oplus \langle a \rangle $ for some unit $a$. 
\end{theorem}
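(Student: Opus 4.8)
The plan is to reduce the problem to a finite, explicitly checkable computation about maps $f : \bbA^2 \to \bbA^2$ with $\dim_k Q_0(f) = 5$, using the dimension reduction result (Theorem \ref{thm:dimension_reduction}) and the intersection-multiplicity bound (Lemma \ref{lem:intersection-multiplicity}). First I would dispose of the case where the ideal $(f_1,\ldots,f_n)$ is contained in $\frakm^2$: by Lemma \ref{lem:intersection-multiplicity} this forces $\dim_k Q_0(f) \geq 2^n$, so with rank $5$ we must have $n \leq 2$, and in fact $n = 2$ is impossible since $2^2 = 4 < 5$ would still be consistent — wait, we need $\dim \geq 2^n$, so $n=2$ gives $\dim \geq 4$, which is compatible; but then we would need a separate argument. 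More cleanly: if $(f_1,\ldots,f_n) \not\subseteq \frakm^2$, Theorem \ref{thm:dimension_reduction} lets us eliminate one variable at the cost of scaling the EKL class by a unit, and we repeat until either we reach $n = 1$ or the ideal of the reduced map lies in the square of the maximal ideal. For $n = 1$, Lemma \ref{lem:1dim} applies directly and gives $w_0(f) = 2\bbH \oplus \langle a \rangle$ (taking the exponent $5$, which is odd). If we get stuck at $n = 2$ with the ideal inside $\frakm^2$, then Lemma \ref{lem:intersection-multiplicity} gives $\dim_k Q_0 \geq 4$, which is consistent with $5$, so this sub-case genuinely needs handling. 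Thus after the reduction the remaining work is: classify rank-$5$ EKL forms coming from maps $f : \bbA^2 \to \bbA^2$, and separately handle the case $(f_1,f_2) \subseteq \frakm^2$ with $\dim = 5$.

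For the planar case, the strategy is to normalize $f$ as much as possible. By Lemma \ref{lem:Nth_power_diff} we may truncate $f_1, f_2$ to total degree $\leq 5$. By the argument inside the proof of Theorem \ref{thm:dimension_reduction}, if some $f_i$ has a nonzero linear part we can reduce to $n = 1$; so in the genuinely two-variable situation both $f_1$ and $f_2$ lie in $\frakm^2$, i.e.\ $(f_1, f_2) \subseteq \frakm^2$, and we are back in the leftover sub-case — the two sub-cases are actually the same sub-case. So the crux is: \emph{let $f : \bbA^2 \to \bbA^2$ with $f_1, f_2 \in \frakm^2 = (x,y)^2$ and $\dim_k Q_0(f) = 5$; show $w_0(f) \cong 2\bbH \oplus \langle a\rangle$.} Here I would analyze the quadratic parts $f_{1,2}, f_{2,2}$ of $f_1, f_2$. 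Using Lemma \ref{lem:linear-combination} (row operations on the generators) and Lemma \ref{lem:linear-composition} (post-composition with a linear isomorphism, scaling the form by a unit) and a linear change of source coordinates, I can bring the pencil of binary quadratic forms $\lambda f_{1,2} + \mu f_{2,2}$ into a short list of normal forms (the pencil is spanned by at most two binary quadratics in $x, y$: generic case two distinct conics, degenerate cases a double line appearing, or the quadratic parts being dependent/zero). For each normal form one computes a Gröbner basis, reads off that $\dim_k Q_0(f) = 5$ pins down the higher-degree terms up to the moves available, and then computes $E$ and the Gram matrix of $\beta_\phi$ in the monomial basis, exactly as in the proofs of Lemmas \ref{lem:2dim-1}, \ref{lem:2dim-2}, \ref{lem:n-m-powers}. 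In each case the Gram matrix will be (block) anti-diagonal and diagonalize to $2\bbH \oplus \langle a \rangle$; that the anti-diagonal structure appears is exactly the content of Theorem \ref{thm:H-summand} applied once, plus the observation that a rank-$3$ form with an isotropic vector is $\bbH \oplus \langle a\rangle$.

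The main obstacle I anticipate is the case analysis on the pencil of quadratic parts and controlling the higher-order terms: one must argue that once the quadratic (and where needed cubic) parts are normalized, the constraint $\dim_k Q_0(f) = 5$ leaves only finitely many possibilities for the remaining monomials up to the available changes of variables, and that in each the socle element $E$ lands in a position making $\phi$ vanish on enough basis vectors to force two hyperbolic summands. Concretely, the danger is a configuration where $E$ is forced to pair nontrivially with several basis monomials at once, potentially producing a form like $\bbH \oplus \langle a, b, c\rangle$ with an anisotropic rank-$3$ part; I would need to rule this out by showing that in every normal form the monomial basis can be ordered so that the Gram matrix has an anti-diagonal $4 \times 4$ (or $2\times 2 \oplus 2\times 2$) block. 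An alternative, cleaner route — which I would mention as a fallback — is to invoke the Eisenbud--Levine--Teissier inequality \cite[Theorem 3.9(i)]{ELT}: a rank-$5$ EKL form from a map of $n$-space has at least $\tfrac{5 - 5^{1-1/n}}{2}$ hyperbolic summands, which for all $n \geq 1$ exceeds $1$, hence is at least $2$; combined with the rank being odd this forces exactly $2\bbH \oplus \langle a\rangle$. But the point of this paper being elementary and constructive, I would present the direct Gröbner-basis classification as the main proof and relegate the ELT shortcut to a remark.
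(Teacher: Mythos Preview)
Your reduction to the planar case is exactly what the paper does: apply Theorem \ref{thm:dimension_reduction} repeatedly until either $n=1$ (handled by Lemma \ref{lem:1dim}) or the ideal lies in $\frakm^2$, and then Lemma \ref{lem:intersection-multiplicity} forces $n\le 2$. Your normalization of the quadratic parts is also essentially what the paper does at the start of Lemma \ref{lem:cases}: the constraint $\dim_k Q_0(f)=5$ forces the projective tangent cones of $f_1$ and $f_2$ to meet in exactly one point, so after a linear change and a row operation one may take $f_{1,2}=xy$ and $f_{2,2}=\alpha x^2$ with $\alpha\neq 0$.

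Where the paper diverges from your plan is in the endgame. You propose to compute $E$, choose $\phi$, write down the Gram matrix in a monomial basis, and diagonalize; you correctly identify the danger that $E$ might pair nontrivially with too many basis elements, possibly leaving an anisotropic rank-$3$ piece. The paper sidesteps this entirely with a structural trick: it looks for a linearly independent pair $\{h_1,h_2\}\subseteq Q_0(f)$ satisfying $h_1^2=h_2^2=h_1h_2=0$ in the ring. Such a pair spans a $2$-dimensional totally isotropic subspace for \emph{any} choice of $\phi$, so $2\bbH$ splits off automatically (Scharlau \cite[Theorem 4.5]{Scharlau}), and no Gram-matrix bookkeeping is needed. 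Organizationally, rather than classifying pencils of quadratics, the paper enumerates the seven possible leading-term staircases of a Gr{\"o}bner basis for $(f_1,f_2)+\frakm^5$ giving $\dim=5$, and in each surviving case exhibits the pair $\{h_1,h_2\}$ directly (e.g.\ $\{x,y^2\}$, $\{xy,y^2\}$, or $\{x^2,y^2\}$). Your approach would work but is more laborious and more exposed to exactly the obstacle you flagged; the paper's ideal-with-zero-square idea is the missing simplification.
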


\begin{proof}
By Theorem \ref{thm:dimension_reduction}, Lemma \ref{lem:intersection-multiplicity}, and our assumption that the rank of $w_0(f)$ is 5, we need only analyze the case of a function $f : \bbA^n \to \bbA^n$ where $n$ is 1 or 2. The one variable case was handled in Lemma \ref{lem:1dim}, so we assume now that $f = (f_1, f_2) : \bbA^2 \to \bbA^2$ has an isolated zero at 0, the rank of $w_0(f)$ is 5, and $(f_1, f_2) \subseteq \frakm^2$.

The local ring $Q_0(f)$ is isomorphic to the ring $P/((f_1, f_2) + \frakm^5)$, and so we may study its structure by studying the structure of a Gr{\"o}bner basis for the ideal $(f_1, f_2) + \frakm^5$. There are only a few possibilities of the leading terms in a Gr{\"o}bner basis in the 2 variable case that give a dimension of 5. We list them in Figure \ref{fig:groebner_1}. 

\begin{figure}[htbp]
    \centering
\begin{tikzpicture}[scale=.9]
\foreach \n in {0,...,5}{\draw (0,\n) -- (2,\n);}
\foreach \n in {0,...,2}{\draw (\n, 0) -- (\n, 5);}
\foreach \n in {0,...,4}{\filldraw [black] (0,\n) circle (4pt);}
\end{tikzpicture} \qquad
\begin{tikzpicture}[scale=.9]
\foreach \n in {0,...,5}{\draw (0,\n) -- (2,\n);}
\foreach \n in {0,...,2}{\draw (\n, 0) -- (\n, 5);}
\foreach \n in {0,...,3}{\filldraw [black] (0,\n) circle (4pt);}
\filldraw[black] (1,0) circle (4pt);
\end{tikzpicture} \qquad
\begin{tikzpicture}[scale=.9]
\foreach \n in {0,...,5}{\draw (0,\n) -- (2,\n);}
\foreach \n in {0,...,2}{\draw (\n, 0) -- (\n, 5);}
\foreach \n in {0,...,2}{\filldraw [black] (0,\n) circle (4pt);}
\filldraw[black] (1,0) circle (4pt);
\filldraw[black] (1,1) circle (4pt);
\end{tikzpicture}\qquad
\begin{tikzpicture}[scale=.9]
\foreach \n in {0,...,5}{\draw (0,\n) -- (3,\n);}
\foreach \n in {0,...,3}{\draw (\n, 0) -- (\n, 5);}
\foreach \n in {0,...,2}{\filldraw [black] (0,\n) circle (4pt);}
\filldraw[black] (1,0) circle (4pt);
\filldraw[black] (2,0) circle (4pt);
\end{tikzpicture}

\vspace{1cm}

\begin{tikzpicture}[scale=.9]
\foreach \n in {0,...,3}{\draw (0,\n) -- (3,\n);}
\foreach \n in {0,...,3}{\draw (\n, 0) -- (\n, 3);}
\foreach \n in {0,...,1}{\filldraw [black] (0,\n) circle (4pt);}
\filldraw[black] (1,0) circle (4pt);
\filldraw[black] (1,1) circle (4pt);
\filldraw[black] (2,0) circle (4pt);
\end{tikzpicture}\qquad
\begin{tikzpicture}[scale=.9]
\foreach \n in {0,...,3}{\draw (0,\n) -- (3,\n);}
\foreach \n in {0,...,3}{\draw (\n, 0) -- (\n, 3);}
\foreach \n in {0,...,1}{\filldraw [black] (0,\n) circle (4pt);}
\filldraw[black] (1,0) circle (4pt);
\filldraw[black] (2,0) circle (4pt);
\filldraw[black] (3,0) circle (4pt);
\end{tikzpicture}\qquad
\begin{tikzpicture}[scale=.9]
\foreach \n in {0,...,3}{\draw (0,\n) -- (4,\n);}
\foreach \n in {0,...,4}{\draw (\n, 0) -- (\n, 3);}
\foreach \n in {0,...,4}{\filldraw [black] (\n,0) circle (4pt);}
\end{tikzpicture}
    \caption{From left to right, top to bottom, we list the leading terms of generators for the Gr{\"o}bner basis. (1) $y^5$, $x$; (2) $y^4$, $xy$, $x^2$; (3) $y^3$, $xy^2$, $x^2$; (4) $y^3$, $xy$, $x^3$; (5) $y^2$, $x^2y$, $x^3$; (6) $y^2$, $xy$, $x^4$; (7) $y$, $x^5$. }
    \label{fig:groebner_1}
\end{figure}
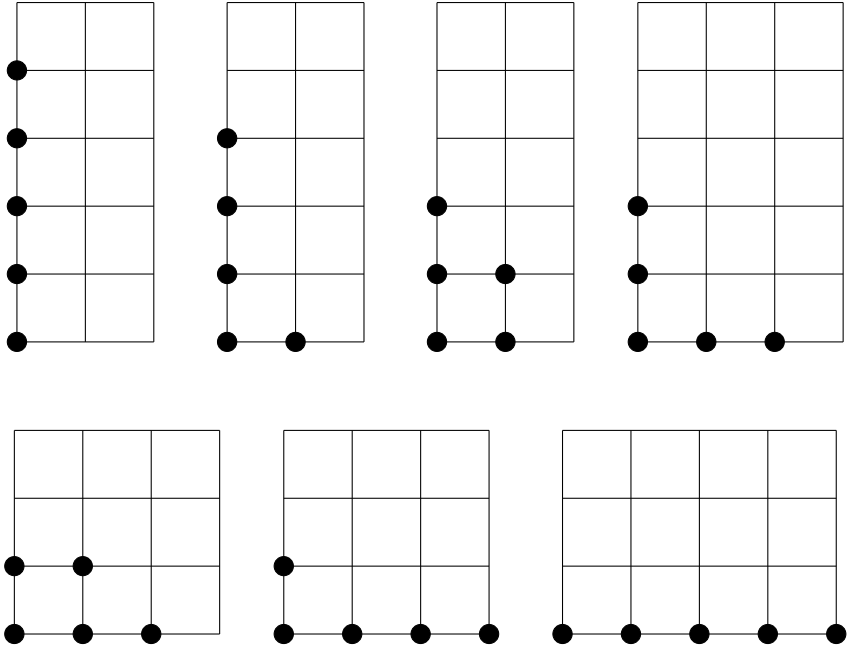

The crux of the argument is to find a subspace $\Span\{h_1,h_2\} \subseteq Q_0(f)$ that is totally isotropic, which then guarantees that $2\bbH$ is a summand of $w_0(f)$ (see Scharlau for a more general result \cite[Theorem 4.5.]{Scharlau}). In our case, the two-dimensional subspace $\Span \{h_1,h_2\}$ is totally isotropic if $\phi(h_1\cdot h_1) = \phi(h_1 \cdot h_2) = \phi (h_2 \cdot h_2) = 0$. We manage to do one better and find $\{h_1,h_2\}$ which satisfy
\begin{align}
\label{eq:isotropic_hs}
h_1^2 = h_2^2 = h_1h_2 = 0 ~\text{in}~ Q_0(f).
\end{align}

The proof now proceeds by a case-by-case analysis in Lemma \ref{lem:cases}.
\end{proof}

\begin{lemma}
\label{lem:cases}
We consolidate a case-by-case analysis of the ring structure of the local rings with Gr{\"o}bner basis of type (1)--(7) found in figure \ref{fig:groebner_1}. In particular, the local ring $Q_0(f)$ contains a linearly independent subset $\{h_1,h_2\}$ which satisfies $h_1^2 = h_2^2 = h_1h_2 = 0$. 
\end{lemma}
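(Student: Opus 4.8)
The plan is to go through each of the seven Gröbner basis configurations in Figure \ref{fig:groebner_1} in turn and, in each case, exhibit two explicit linearly independent elements $h_1, h_2 \in Q_0(f)$ annihilated by each other and by themselves. Since $\dim_k Q_0(f) = 5$, the standard monomial basis in each case is dictated by the staircase diagram, so I would set up notation case by case: write $f_1, f_2$ (and where needed the extra $S$-polynomial) in terms of the unknown structure constants, and then use the defining relations to compute products of basis monomials. The key structural fact to exploit is that the socle is one-dimensional and spanned by $E$, and that any monomial whose degree is large enough lies in the ideal; so the "top" monomials of the staircase are good candidates for the $h_i$, or the $h_i$ will be nilpotents of the right order.

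Concretely, I expect the easy cases to be the ones whose staircase is essentially one-dimensional or a near-anti-diagonal shape. For type (1), the basis is $\{1, y, y^2, y^3, y^4\}$ with $x \equiv 0$, so this reduces to Lemma \ref{lem:1dim} with $n=5$ and one can take $h_1 = y^3, h_2 = y^4$ (or rather $h_1 = y^2 + \text{correction}$ paired with $y^4$ — whichever gives two independent vectors squaring to zero). For type (7), symmetrically, the basis is $\{1, x, x^2, x^3, x^4\}$. For the genuinely two-variable cases (2)--(6) I would reduce $f_1, f_2$ modulo $\frakm^5$, clear out mixed terms using Lemma \ref{lem:linear-combination}, and read off enough of the multiplication table to locate a $2$-plane on which the cubic-and-higher monomials vanish. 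In each case, the monomials $x^2, xy, y^2$ and their images will be constrained: e.g.\ in type (2) with leading terms $y^4, xy, x^2$ the relations force $xy, x^2 \in \Span\{y^2, y^3\} + \frakm^3$-type expressions, and the annihilator conditions cut down to a small linear algebra problem over $k$.

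The main obstacle is type (5), with leading terms $y^2, x^2y, x^3$ and basis $\{1, x, y, x^2, xy\}$: here the ring is genuinely two-dimensional in a nontrivial way, the socle element $E$ is a combination of $x^2$ and $xy$, and it is not immediately obvious which two-plane is totally isotropic. I would handle it by writing $y^2 = \alpha xy + (\text{higher})$, $x^3 = \beta x^2 + \gamma xy$, $x^2 y = \delta xy$ from the Gröbner relations, compute all products of $\{x, y, x^2, xy\}$, and then solve for a vector $v = a x + b y$ together with a second independent vector (likely in $\Span\{x^2, xy\}$, which already multiplies into the socle or zero) so that $v^2 = 0$. The freedom to rescale variables and apply Lemma \ref{lem:linear-composition} should normalize enough constants to make the system solvable; if a sporadic obstruction appears for special values of the structure constants, I would need a separate sub-argument (possibly invoking that the resulting form would then have rank $<5$, contradicting the hypothesis). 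Once \eqref{eq:isotropic_hs} is verified in all seven cases, Theorem \ref{thm:rank-5} follows since $2\bbH$ splits off and the rank-$1$ complement is $\langle a \rangle$ for some unit $a$.
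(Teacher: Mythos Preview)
Your overall strategy---exhibit a linearly independent pair $h_1,h_2$ with $h_1^2=h_2^2=h_1h_2=0$ in each Gr\"obner configuration---is exactly the paper's, and your treatment of types (1) and (7) is fine. The gap is a missing preliminary reduction that the paper performs \emph{before} the case split and that does most of the work.

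Under the standing hypotheses of Theorem~\ref{thm:rank-5} one has $(f_1,f_2)\subseteq\frakm^2$ and $\dim_kQ_0(f)=5$. The paper first argues, via intersection multiplicity at the origin, that the degree-$2$ parts $f_{1,2},f_{2,2}$ must each split over $k$ and share exactly one linear factor; after a linear change of coordinates and an application of Lemma~\ref{lem:linear-combination} one may assume $f_{1,2}=xy$ and $f_{2,2}=\alpha x^2$ with $\alpha\neq 0$. This forces the image of $I/\frakm I$ in $\frakm^2/\frakm^3$ to be exactly $\Span\{xy,x^2\}$, and that single observation immediately eliminates types (5) and (6): both would require $y^2$ to lie in that image. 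It also eliminates (1) and (7), since those require an element with a linear leading term. So only (2), (3), (4) survive, and in each of those the same constraint kills the $y^2$-coefficients in the Gr\"obner generators, collapsing the structure constants to one or two parameters and making the pair $\{h_1,h_2\}$ easy to read off.

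Your plan treats case (5) as a genuine obstacle to be overcome by solving for an isotropic vector $v=ax+by$ in terms of unknown structure constants. This is the wrong diagnosis: case (5) simply cannot occur for a two-generated ideal inside $\frakm^2$ with colength $5$, and the clean way to see that is the tangent-cone argument above (equivalently, the Nakayama bound $\dim_k I/\frakm I\le 2$ combined with the computation of the image in $\frakm^2/\frakm^3$). Without this step your brute-force computation in case (5) would eventually reveal that the constraints are inconsistent, but you would be doing a lot of unnecessary linear algebra to rediscover a fact that follows from one line about quadratic parts. More importantly, the same reduction is what makes cases (2)--(4) tractable in the paper's proof; skipping it leaves you with many more free coefficients there as well.
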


\begin{proof}
As described above, %
we only need to consider maps $f: \bbA^2 \to \bbA^2$ with $(f_1,f_2) \subseteq \frakm^2$. Express the polynomials $f_1$ and $f_2$ as a sum of homogeneous polynomials as follows: $f_1 = f_{1,5} + \cdots + f_{1,2}$ and $f_2 = f_{2,5} + \cdots + f_{2,2}$. We are assuming that the intersection multiplicity of $V(f_1)$ and $V(f_2)$ at $0$ is 5. This can only happen when the projective tangent cones meet in exactly one point.\footnote{If the projective tangent cones were disjoint and $f_{1,2}$ and $f_{2,2}$ are non-zero, then the intersection multiplicity is exactly 4. If the projective tangent cones are identical, then $f_{1,2}$ and $f_{2,2}$ differ only by a scalar, and thus we can eliminate $f_{2,2}$ by adding some multiple of $f_1$ to $f_2$ without changing the intersection multiplicity. But then the intersection multiplicity is at least 6 in this case. Thus the only possibility is that $f_{1,2}$ and $f_{2,2}$ are both non-zero, split over $k$, and share a common factor.} This necessitates that both $f_{1,2}$ and $f_{2,2}$ split as a product of lines defined over the ground field and share exactly one factor.  We can thus compose with an invertible linear transformation to modify $f_{1,2}$ to $xy$ and $f_{2,2}$ to $x(\alpha x+\beta y)$. This modification will only affect the EKL class by scalar multiplication by a unit by Lemma \ref{lem:linear-composition}. 

We now assume that $f_{1,2} = xy$ and $f_{2,2} = \alpha x^2 + \beta xy$. But by adding $-\beta f_1$ to $f_2$ we do not change the EKL class and obtain $f_{1,2} = xy$ and $f_{2,2} = \alpha x^2$. It must be that $\alpha \neq 0$ for otherwise the rank of the EKL class would be at least 6. 

We now proceed to our case-by-case analysis with the reduction above performed. Recall that we use the graded reverse lexicographic ordering on monomials $x^iy^j$ with $x > y$. 

First note that cases (1) and (7) are not permitted as the Gr{\"o}bner basis structure implies $I = (f_1, f_2) + \frakm^5$ is not contained in $\frakm^2$. 

Case (2): We know that the ideal $(f_1, f_2) + \frakm^5$ has a Gr{\"o}bner basis of the form 
\begin{align*}
    g_1 &= y^4 + ax^3 + bx^2y + cxy^2 + dy^3 + ex^2 + fxy + gy^2 \\
    g_2 &= xy + hy^2\\
    g_3 &= x^2 +ixy +j y^2.
\end{align*}
Hence the ideal $I = (f_1,f_2) \subseteq P_0$ is equal to $(g_1,g_2,g_3)$, which we now investigate. The dimension of $I/\frakm I$ must be at most 2 as the ideal $I$ is generated by $(f_1,f_2)$ (this is a consequence of Nakayama's Lemma). Thus the image of $I/\frakm I$ in $\frakm^2/\frakm^3$ is at most 2 dimensional, and our assumptions show that $\{x^2, xy\}$ is in the image. So there can be no $y^2$ term in $g_2$ or $g_3$. Thus the generators $(g_1,g_2,g_3)$ can be modified to give a new generating set of $I$ of the form 
\begin{align*}
    g_1 &= y^4 + ay^3  \\
    g_2 &= xy \\
    g_3 &= x^2.
\end{align*}
for some new constant $a$. Now observe that $yg_1 \in I$ too, that is, $y^5 + ay^4 \in I$ and we also know $y^5 \in I$. In any event, $y^4 \in I$. Thus we conclude $\frakm^4 \subseteq I$. Hence we have $\{x, y^2\}$ a linearly independent subset of $Q_0(f)$ satisfying \eqref{eq:isotropic_hs}, i.e., that it is totally isotropic. 

Case (3): The ideal $(f_1, f_2) + \frakm^5$ has a Gr{\"o}bner basis of the form 
\begin{align*}
    g_1 &= y^3 + ax^2 + bxy + cy^2 \\
    g_2 &= xy^2 + dy^3 + ex^2 + fxy + gy^2 \\
    g_3 &= x^2 + hxy + i y^2.
\end{align*}
Thus $I = (f_1,f_2) = (g_1,g_2,g_3)$ in $P_0$. By our assumption that the image of $I/\frakm I$ in $\frakm^2/\frakm^3$ is generated by $\{x^2, xy\}$, we can simplify the Gr{\"o}bner basis to
\begin{align*}
    g_1 &= y^3  + axy  \\
    g_2 &= xy^2 + bxy  \\
    g_3 &= x^2 + cxy.
\end{align*}
for some constants $a,b,c$. Observe now that $g_2$ factors as $xy(y+b)$. If $b\neq 0$, then $(y+b)$ is invertible in $Q_0(f)$, hence we may conclude that $\{xy,x^2,y^3\}$ generates $I$, from which it follows that $\dim_k Q_0(f) < 5$. Thus we need only consider the case when $b = 0$. Now as $yg_1 = y^4 + axy^2 \in I$ and $xy^2 \in I$ it follows that $y^4 \in I$. But now, $x^2y^2 \in I$ and the set $\{y^2, xy\}$ satisfies \eqref{eq:isotropic_hs} and is totally isotropic in $Q_0(f)$. 

Case (4): The ideal $(f_1, f_2) + \frakm^5$ has a Gr{\"o}bner basis of the form 
\begin{align*}
    g_1 &= y^3 + ax^2 + bxy + cy^2 \\
    g_2 &= xy + dy^2 \\
    g_3 &= x^3 + ex^2y + fxy^2 + gy^3 + hx^2 + ixy + jy^2.
\end{align*}
Thus $I = (f_1, f_2) = (g_1,g_2,g_3)$ in $P_0$. Because both $xy$ and $x^2$ are in the image of the map $I/\frakm I \to \frakm^2/\frakm^3$, we  conclude that no $y^2$ terms are permitted. Thus $g_2 = xy$ and we may simplify the generators to 
\begin{align*}
    g_1 &= y^3 + ax^2  \\
    g_2 &= xy  \\
    g_3 &= x^3 + bx^2.
\end{align*}
with $a$ and $b$ redefined constants. We now observe that $yg_1 = y^4 + ax^2y$ is in the ideal, and thus so too is $y^4$. 
Observe that $g_3 = x^2(x+b) \in I$ implies that $x^3 \in I$ if $b = 0$ or $x^2 \in I$ if $b\neq 0$, as in the local ring $x+b$ is a unit. In either case, it follows that $x^4 \in I$. Thus $\frakm^4 \subseteq I$. 
Hence we have $\{x^2, y^2\}$ satisfies \eqref{eq:isotropic_hs} and is a totally isotropic subset of $Q_0(f)$.

Cases (5) and (6): In these cases, we are forced to have $y^2$ as a member of the Gr{\"o}bner basis, which is not permitted by our assumption that the image of $I/\frakm I$ in $\frakm^2/\frakm^3$ is generated by $\{xy, x^2\}$. 

\end{proof}

\section{The inequalities of Eisenbud, Levine, and Teissier}

As was mentioned in the introduction, our calculations in Theorems \ref{thm:H-summand} and \ref{thm:rank-5} can be explained by the inequalities of Eisenbud, Levine, and Teissier in \cite{ELT}. Throughout the section, $k$ is a field with characteristic different from 2. We present their results now and consider the implications for the use of EKL forms in motivic homotopy theory. 

Eisenbud and Levine show that an EKL form has $n$ hyperbolic summands if and only if the local ring $Q_0(f)$ contains an ideal $I$ of dimension $n$ which satisfies $I^2 = 0$. When such an ideal is found, the dual ideal for $I$, denoted by $I^*$, is used to produce the hyperbolic subspace $I \oplus I^*$ of the EKL form. Thus the quadratic space $Q_0(f)$ with the quadratic form $w_0(f)$ decomposes as $Q_0(f) = I \oplus I^* \oplus D$, where $D$ is an anisotropic subspace, meaning that it contains no isotropic subspaces. Note that $\dim_k I$ is the number of hyperbolic summands of the quadratic form $w_0(f)$. Eisenbud and Levine state their results by describing how large the dimension of $D$ can possibly be. 

\begin{theorem}
\label{thm:ELT}
Let $k$ be a field with characteristic different from $2$. The Eisenbud--Levine--Teissier inequality states that if $f : \bbA^n_k \to \bbA^n_k$ is a polynomial map which has an isolated zero at $0$ so that $\dim_k Q_0(f) \geq 1$, then the dimension of the anisotropic part of the quadratic space $(Q_0(f), w_0(f))$ is bounded by:
\begin{align}
    \dim_k D & \leq \dim_k Q_0(f)^{1-1/n} \\
    \dim_k D & \leq \frac{\dim_k Q_0(f)}{2}.
\end{align}
The first inequality is tighter, but the latter has the advantage of being independent of $n$.
\end{theorem}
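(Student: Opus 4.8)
The plan is to deduce this from the square-zero-ideal characterization of hyperbolic summands recalled just above, exactly as in \cite[Theorem 3.9]{ELT}; the only genuinely difficult ingredient is a combinatorial inequality of Teissier, which I would import rather than reprove.

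\emph{Step 1: normalize the number of variables.} First I would apply Theorem \ref{thm:dimension_reduction} to reduce to the case $(f_1,\dots,f_n)\subseteq\frakm^2$. If some $f_i$ has a nonzero linear part, that theorem produces a map in one fewer variable with isomorphic local ring and with EKL class changed only by a unit scalar $\langle u\rangle$; since $\langle u\rangle\bbH\cong\bbH$, scaling by $\langle u\rangle$ changes neither $N:=\dim_k Q_0(f)$ nor the dimension of the anisotropic part $D$. Iterating, I either arrive at a map in one variable, where Lemma \ref{lem:1dim} gives $\dim_k D\le 1$ and both inequalities are immediate, or I arrive at a map $\bbA^m\to\bbA^m$ with $m\le n$ and $(f_1,\dots,f_m)\subseteq\frakm^2$. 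In the latter case Lemma \ref{lem:intersection-multiplicity} gives $N\ge 2^m$, hence $N^{1-1/m}\le N/2$, so the second inequality will follow from the first once the first is proved with $n$ replaced by $m$. I then relabel $m$ as $n$ and assume $(f_1,\dots,f_n)\subseteq\frakm^2$ from now on.

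\emph{Step 2: exhibit a large totally isotropic subspace.} Let $s$ be the socle degree of $Q_0(f)$, that is, the largest integer with $\frakm^s\ne 0=\frakm^{s+1}$ in $Q_0(f)$, and set $t=\lfloor s/2\rfloor+1$, so that $2t\ge s+1$. Then $I:=\frakm^t$ is an ideal of $Q_0(f)$ with $I^2\subseteq\frakm^{2t}=0$, so for any admissible $\phi$ and all $x,y\in I$ we have $\beta_\phi(x,y)=\phi(xy)=0$; in other words $I$ is a totally isotropic subspace of the nondegenerate quadratic space $(Q_0(f),w_0(f))$. By Witt's decomposition theorem the Witt index is then at least $\dim_k I$, and, writing $h_j=\dim_k\frakm^j/\frakm^{j+1}$ for the Hilbert function of $\mathrm{gr}_\frakm Q_0(f)$, I would record
\begin{equation*}
\dim_k D \;=\; N-2\cdot(\text{Witt index of } w_0(f)) \;\le\; N-2\dim_k\frakm^t \;=\; \sum_{j=0}^{t-1}h_j-\sum_{j=t}^{s}h_j .
\end{equation*}

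\emph{Step 3: the Teissier estimate --- the main obstacle.} It remains to bound $\sum_{j<t}h_j-\sum_{j\ge t}h_j$ by $N^{1-1/n}$. The graded ring $\mathrm{gr}_\frakm Q_0(f)$ is generated in degree $1$ by $h_1\le n$ elements, is nonzero in degree $s$, and has $\sum_j h_j=N$; under these constraints the required bound on the alternating sum of the $h_j$ is precisely the content of the inequality in the appendix to \cite{ELT}, which ultimately rests on Teissier's Minkowski-type inequality for the mixed multiplicities of $\frakm$ and $(f_1,\dots,f_n)$ in the regular local ring $P_0$. This is where all the real work lies, and I would take it as a black box from \cite{ELT}. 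Granting it, both inequalities of the theorem follow via Steps 1--2. Finally I would observe the consequences the authors advertise: for $N\ge 2$ the first inequality forces $\dim_k D<N$ while $\dim_k D\equiv N\pmod 2$, so $w_0(f)$ has an $\bbH$ summand (recovering Theorem \ref{thm:H-summand}); and for $N=5$ Step 1 forces $n\le 2$, whence $\dim_k D\le\sqrt 5<3$ with $\dim_k D$ odd, so $\dim_k D=1$ and $w_0(f)\cong 2\bbH\oplus\langle a\rangle$ (recovering Theorem \ref{thm:rank-5}).
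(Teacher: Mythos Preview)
The paper does not prove Theorem \ref{thm:ELT} at all; it is stated as a quotation from \cite{ELT} (specifically \cite[Theorem 3.9]{ELT}) and is immediately followed by its corollary with no intervening proof. Your proposal therefore does strictly more than the paper: you outline how the ELT argument actually runs, with Teissier's appendix as the acknowledged black box.

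Your outline is faithful to \cite{ELT} in Steps 2--3: the passage from a square-zero ideal to hyperbolic summands is exactly what the paragraph preceding the theorem describes, the ideal $\frakm^t$ with $t=\lfloor s/2\rfloor+1$ is the natural square-zero candidate, and the residual combinatorial estimate on the Hilbert function is indeed where Teissier's Minkowski-type inequality on mixed multiplicities enters. Step 1, routing through Theorem \ref{thm:dimension_reduction} and Lemma \ref{lem:intersection-multiplicity} to get the second inequality from the first, is your own addition and is not how \cite{ELT} proceed, but it is a clean reduction and your arithmetic ($m\le n$ gives $N^{1-1/m}\le N^{1-1/n}$, and $N\ge 2^m$ gives $N^{1-1/m}\le N/2$) is correct. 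Two small points: your appeal to Lemma \ref{lem:1dim} in the one-variable endpoint is not literally applicable since that lemma treats only $x\mapsto ax^N$, but any one-variable $f$ with an isolated zero at $0$ factors in $P_0$ as $x^N$ times a unit, so the same conclusion holds; and your final paragraph recovering Theorems \ref{thm:H-summand} and \ref{thm:rank-5} is a nice sanity check but is, of course, extraneous to what was asked.
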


\begin{corollary}
For a map $f : \bbA^n_k \to \bbA^n_k$ with an isolated zero at $0$, it is
an immediate consequence of the ELT inequality that the number of hyperbolic summands in $(Q_0(f), w_0(f))$ must at least be: 
\begin{align*}
    \dim_k I  & \geq \frac{\dim_k Q_0(f) - \dim_k Q_0(f)^{1-1/n} }{2} \\ 
    \dim_k I & \geq \frac{\dim_k Q_0(f)}{4}.
\end{align*}
\end{corollary}

With these inequalities, we can extend the chart in Table \ref{tab:min_hyperbolic} with the values in Table \ref{tab:min_hyperbolic_2}. Unfortunately, the case of rank 8 is unclear. It is possible that there is a 3 variable example with EKL form $2\bbH \oplus \langle a, b ,c ,d\rangle$. In the two variable case, the ELT inequality says that the only possible EKL forms are $3\bbH \oplus \langle a, b\rangle$. 

\begin{table}[htbp]
    \centering
    \begin{tabular}{c|c}
         Rank & EKL form type \\
         6 & $2\bbH \oplus \langle a, b\rangle $\\
         7 & $3\bbH \oplus \langle a \rangle$ \\
         8 & ?
    \end{tabular}
    \caption{This is a list of the possible EKL forms of a given rank. In the EKL form type, the constants $a$ and $b$ are arbitrary units in the ground field.}
    \label{tab:min_hyperbolic_2}
\end{table}

It is unclear what exactly happens in rank 9. We have found an example over $\bbR$ with EKL form $3\bbH \oplus \langle 1,1,1\rangle$, see Example \ref{ex:rank_9}. But does this mean that over a general field $3\bbH \oplus \langle a, b ,c \rangle$ is also always realizable as an EKL form, or only some proper subset of these? E.g., over $\bbQ$ we may indeed have $3\bbH \oplus \langle 3,3,3\rangle$ representable as an EKL form, but that does not mean that $3\bbH \oplus \langle 3,5,7\rangle$ is representable too. Can we fully describe the set of quadratic forms representable by EKL forms at 0? 

\section{Computational results}
The process of computing the EKL class of a map $f: \bbA^n\to \bbA^n$ can be automated and performed by a computer. Sabrina Pauli \cite{Pauli} uses Macaulay2 to perform such computations over finite fields, for example. We have written a simple python program that calculates the EKL class of a map, which is available on Github \cite{Wilson-local}. In this section, we provide some experimental results that give upper bounds for the minimal number of hyperbolic summands that must appear in an EKL form of rank $n$. Table \ref{tab:min_hyperbolic} gives the complete information of EKL forms up to rank 5, and in this section we will see how complicated it may be to extend this table in general. 

\begin{definition}
Let $m$ and $n$ be positive natural numbers and $k$ a field. Write $M(m,n;k)$ for the minimum number of hyperbolic summands appearing in rank $m$ EKL forms $w_0(f)$ for all possible maps $f : \bbA^n \to \bbA^n$ over the field $k$, and define $M(m)$ to be the minimum of $M(m,n;k)$ over the set of all possible $n$ and $k$. We also consider the numbers $N(m,n;k) = m - 2M(m,n;k)$ and $N(m) = m - 2M(m)$, which tell us the largest possible rank of a representative of $w_0(f)$ in the Witt ring, i.e., the rank of the ``Grundform'' of $w_0(f)$ \cite[page 35]{Witt}.  
\end{definition}

Our work up until now has established the values of $M(m)$ for $m \leq 7$, which are: $M(1) = 0$, $M(2) = 1$, $M(3) = 1$, $M(4) = 1$, $M(5) = 2$, $M(6) = 2$, $M(7) = 3$.

Morel's $\bbA^1$-degree gives a ring map $\deg^{\bbA^1} : [\bbP^n/\bbP^{n-1}, \bbP^n/\bbP^{n-1}] \to \GW(k)$ by his work in \cite[Theorem 6.3.3 and p.\ 427]{Morel03}.
Because Morel's $\bbA^1$-degree and EKL form agree in $\GW(k)$ for the local degree at a $k$-point, it follows that EKL forms must satisfy a chain rule. 
An algebraic proof of the chain rule for EKL forms was established by Knight, Swaminathan, and Tseng in \cite[Theorem 13]{KST_published}. 
We believe that their argument to establish the chain rule is correct, however, we noticed a few inaccuracies in the written proof in \cite[Theorem 13, page 80]{KST_published}. 
We provide a proof of the chain rule following their argument for completeness. We do not claim any originality. 

\begin{theorem}
\label{thm:chainrule}
Let $f,g \colon \bbA^n \to \bbA^n$ be maps with isolated zeros at the origin. 
The EKL form of the composition $f \circ g$ is the product of the EKL forms of $f$ and $g$, i.e.,
\begin{align*}
w_0(f\circ g) = w_0(f) \cdot w_0(g)~\text{in}~\GW(k).     
\end{align*}
\end{theorem}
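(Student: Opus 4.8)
The strategy is to reduce the chain rule for EKL forms to a local-to-local statement about the structure of the composite local ring $Q_0(f \circ g)$, and then exhibit an explicit isometry between $(Q_0(f\circ g), w_0(f\circ g))$ and the tensor product of the bilinear spaces $(Q_0(f), w_0(f))$ and $(Q_0(g), w_0(g))$. First I would fix notation: write $g : \bbA^n \to \bbA^n$, $f : \bbA^n \to \bbA^n$, and $f\circ g : \bbA^n \to \bbA^n$, all with isolated zeros at $0$. A convenient first reduction is to replace $g$ by a representative whose zero locus near $0$ consists only of the origin (possible after discarding higher-degree terms via Lemma~\ref{lem:Nth_power_diff}), so that $Q_0(f\circ g) = P_0/(f\circ g)$ is computed entirely at the origin. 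The key algebraic input is that the local ring of the composite fits in a sequence
\begin{equation*}
Q_0(f\circ g) = P_0/(f_1(g), \ldots, f_n(g)),
\end{equation*}
and that $g^*$ makes $Q_0(f\circ g)$ a module over $Q_0(g)$ with $Q_0(f\circ g)/\frakm_{g} Q_0(f\circ g) \cong Q_0(f)$ via the induced map on residue rings. Since the rank of $w_0(f\circ g)$ equals $\dim_k Q_0(f\circ g) = \dim_k Q_0(f) \cdot \dim_k Q_0(g)$ (the composite degree is multiplicative), one expects $Q_0(f\circ g)$ to be free of rank $\dim_k Q_0(f)$ over $Q_0(g)$, giving a short exact sequence of $k$-vector spaces that one can split.

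Next I would address the socle element. The multiplicativity of the distinguished socle element under composition is the crux: one must show $E(f\circ g) = E(g)\cdot g^*(E(f))$ up to a unit in $Q_0(f\circ g)$, or more precisely that the socle generator of $Q_0(f\circ g)$ is the product of (the image of) $E(g)$ and $g^*$ of (a lift of) $E(f)$. This follows from the chain rule for Jacobian-type determinants: writing $f_i(g(x)) = \sum_j a_{ij}(x) x_j$ and $g_\ell(x) = \sum_j b_{\ell j}(x) x_j$, one compares $\det(a_{ij})$ with the product of $\det$ of the ``outer'' matrix (a $g$-pullback of the matrix defining $E(f)$) and $\det(b_{\ell j}) = E(g)$; the point is that modulo the ideal $(f\circ g)$ these determinants behave like derivatives under the chain rule. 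This is precisely the step where Knight--Swaminathan--Tseng's written argument needs care, and it is where I expect the main obstacle to be: one is not literally differentiating but working with the ``first-order expansion'' matrices $a_{ij}, b_{\ell j}$, which are only well-defined modulo the relevant power of $\frakm$, and one must check the matrix identity holds to high enough order that the determinant descends correctly to $Q_0(f\circ g)$.

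With the socle computation in hand, the rest is a clean bilinear-algebra argument. Choose $\phi_f : Q_0(f) \to k$ with $\phi_f(E(f)) = 1$ and $\phi_g : Q_0(g) \to k$ with $\phi_g(E(g)) = 1$. Build $\phi : Q_0(f\circ g) \to k$ as the composite: first a $Q_0(g)$-linear "integration along the fibre'' $Q_0(f\circ g) \to Q_0(g)$ that on the top piece recovers $\phi_f$ after identifying the quotient with $Q_0(f)$, then $\phi_g$. One checks $\phi(E(f\circ g)) = \phi_g(E(g) \cdot (\text{image of } E(f)\text{-part})) = 1$ using the socle identity, so $\phi$ is a valid choice for computing $w_0(f\circ g)$. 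Finally, with respect to a basis of $Q_0(f\circ g)$ adapted to the filtration by powers of $\frakm_g$ and chosen compatibly with bases of $Q_0(g)$ and $Q_0(f)$, the Gram matrix of $\beta_\phi$ becomes block upper-triangular with diagonal blocks that are (units times) the tensor product of the Gram matrices of $\beta_{\phi_f}$ and $\beta_{\phi_g}$; a congruence clears the off-diagonal blocks, yielding $w_0(f\circ g) = w_0(f)\cdot w_0(g)$ in $\GW(k)$. The bookkeeping in this last step is routine once the freeness of $Q_0(f\circ g)$ over $Q_0(g)$ and the socle identity are established, so the substantive content is entirely in the determinant/chain-rule lemma flagged above.
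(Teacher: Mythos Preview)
Your proposal takes a genuinely different route from the paper. The paper (following Knight--Swaminathan--Tseng) never analyzes $Q_0(f\circ g)$ as a module over anything. Instead it uses a \emph{doubling trick}: extend $f,g$ to $\tilde f,\tilde g:\bbA^{2n}\to\bbA^{2n}$ by letting them act as $f,g$ on the first $n$ coordinates and as the identity on the last $n$, so that $w_0(\tilde f\circ\tilde g)=w_0(f\circ g)$. One then interposes a sequence of unipotent linear maps $L_i$ between $\tilde f$ and $\tilde g$---each leaves the EKL class unchanged by \cite[Lemma~12]{KST_published}, the unipotent case of Lemma~\ref{lem:linear-composition}---and these elementary shears convert $\tilde f\circ\tilde g$ step by step into the \emph{product} map $(x,y)\mapsto(g(x),f(y))$, whose EKL class is immediately $w_0(f)\cdot w_0(g)$. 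No socle chain rule, no freeness, no transfer is needed; the place where the KST argument actually needed correction is in the bookkeeping of these unipotent compositions, not in any Jacobian-type identity.

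Your transfer approach is viable in principle (it is essentially the Scheja--Storch/duality proof of multiplicativity), but two points in the outline are off. First, the module structure goes the other way: $g^*$ makes $Q_0(f\circ g)$ an algebra over $Q_0(f)$, and quotienting by the image of its maximal ideal---that is, by $(g_1,\dots,g_n)$---gives $Q_0(g)$; you have the roles of $f$ and $g$ interchanged throughout that paragraph. Second, the final ``block upper-triangular plus congruence'' step is not the routine bookkeeping you suggest. What must really be shown is that the $Q_0(f)$-valued bilinear form your fibre integration $\psi$ puts on $Q_0(f\circ g)$ is isometric, over the Artin local ring $Q_0(f)$, to the constant form $w_0(g)\otimes_k Q_0(f)$; only then does Scharlau transfer along $\phi_f$ yield $w_0(g)\cdot w_0(f)$. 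Establishing that isometry over a non-field base is the genuine content of this route and does not drop out of a filtration argument. The paper's doubling trick sidesteps all of this at the price of working in $2n$ variables.
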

\begin{proof}
We follow the proof by Knight, Swaminathan and Tseng with a minor correction. 
The idea is to add further variables and to compose with appropriate linear transformations to make $f$ and $g$ act on separate variables. 
So let $\tilde{f},\tilde{g} : \bbA^n\times \bbA^n \to \bbA^n \times \bbA^n$ be defined by sending $(x,y)$ to $(f(x),y)$ and $(g(x),y)$, respectively, where we write $(x,y)$ for $(x_1,\ldots,x_n,y_1,\ldots,y_n)$. 
It is straightforward to compute that the EKL class of a product of two maps $\bbA^n \to \bbA^n$ is the product of the EKL classes. 
Since $\tilde{f}\circ \tilde{g}$ equals $f \circ g$ on the first component and is the identity on the second component, we get 
\[
w_0(\tilde{f}\circ \tilde{g})=w_0(f\circ g)
\]
in $\GW(k)$. 
Hence it suffices to show $w_0(\tilde{f}\circ \tilde{g}) = w_0(f)\cdot w_0(g)$. 
The key tool is \cite[Lemma 12]{KST_published} which states that for maps $f$ and $g$ as in the theorem and a unipotent linear transformation $L: \bbA^n \to \bbA^n$ we have 
\begin{align}
\label{eq:KSTlemma12}
w_0(f \circ L \circ g) = w_0(f \circ g).
\end{align}
Note that this statement agrees with Lemma \ref{lem:linear-composition} for the case that $A=L$ is unipotent (and thus has determinant equal 1) and $f$ being the identity. 

Let $I_n$ denote the $n\times n$-identity matrix. 
Consider the following three unipotent matrices: 
\begin{align*}
L_1 = \begin{pmatrix}
I_n & 0 \\
-I_n & I_n
\end{pmatrix}, \quad
L_2 = \begin{pmatrix}
I_n & I_n \\
0 & I_n
\end{pmatrix}, \quad
L_3 = \begin{pmatrix}
I_n & 0 \\
-I_n & I_n
\end{pmatrix}. 
\end{align*}
Then it follows from \eqref{eq:KSTlemma12}, i.e., \cite[Lemma 12]{KST_published}, that the following two compositions have the same EKL forms: 
\begin{align*}
\tilde{f} \circ \tilde{g} : (x,y) \mapsto (f(g(x)),y) ~\text{and}~
\tilde{f} \circ L_1 \circ \tilde{g} : (x,y) \mapsto (f(g(x)),-g(x) + y). 
\end{align*}
Now we apply \cite[Lemma 12]{KST_published} to the maps $\tilde{f}$ and $(L_1\circ \tilde{g})$ and the unipotent transformation $L_2$. 
This shows that the EKL forms of the maps 
\begin{align*}
\tilde{f} \circ L_1 \circ \tilde{g} : (x,y)  \mapsto (f(g(x)),-g(x) + y) \\
\tilde{f} \circ L_2 \circ (L_1 \circ \tilde{g}) : (x,y) \mapsto (f(y),-g(x)+y) 
\end{align*}
are equal. 
Next we apply \cite[Lemma 12]{KST_published} to the maps $\tilde{f}$ and $(L_2\circ L_1 \circ \tilde{g})$ and the unipotent transformation $L_3$. 
This shows that the EKL forms of the maps 
\begin{align*}
\tilde{f} \circ L_2 \circ (L_1 \circ \tilde{g}) : (x,y) \mapsto (f(y),-g(x)+y) \\
\tilde{f} \circ L_3 \circ (L_2 \circ L_1 \circ \tilde{g}) : (x,y) \mapsto (f(y),-g(x)) 
\end{align*}
are equal as well. 
Denote the map $(x,y)\mapsto (f(y),-g(x))$ by $f\times (-g)$. 
We have shown that $w_0(\tilde{f}\circ \tilde{g})$ equals $w_0(f\times (-g))$.

Now we compose $f\times (-g)$ with the matrix 
$A=\begin{pmatrix}
0 & -I_n \\
I_n & 0
\end{pmatrix}$ and deduce from Lemma \ref{lem:linear-composition} that $f\times (-g)$ and $A\circ (f\times (-g)) = g \times f$ have the same EKL class.\footnote{Alternatively, one can apply \cite[Lemma 12]{KST_published} in three successive steps to the composition $L_6 \circ L_5 \circ L_4 \circ (f\times (-g))=g\times f$ where $L_4$, $L_5$, $L_6$ denote the unipotent matrices 
\begin{align*}
L_4 = \begin{pmatrix}
I_n & -I_n \\
0 & I_n
\end{pmatrix}, \quad
L_5 = \begin{pmatrix}
I_n & 0 \\
I_n & I_n
\end{pmatrix}, \quad
L_6 = \begin{pmatrix}
I_n & -I_n \\
0 & I_n
\end{pmatrix} 
\end{align*}
such that their composition is $L_6 \circ L_5 \circ L_4 = A$.
}
Hence we have 
\begin{align*}
w_0(f\circ g) = w_0(\tilde{f}\circ \tilde{g}) = w_0(g \times f).
\end{align*}
To conclude, it is straightforward to check that the EKL class of  the product $g\times f$ satisfies $w_0(g \times f)=w_0(g)\cdot w_0(f)$ in $\GW(k)$. 
This concludes the proof.
\end{proof}

From this, we can obtain some simple bounds for $M(m,2;k)$ and $N(m,2;k)$. 

\begin{corollary}
The set of quadratic forms that are realizable as EKL forms at 0 over $k$ is a multiplicative submonoid of $\GW(k)$.
\end{corollary}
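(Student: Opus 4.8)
The plan is to show that the set of quadratic forms realizable as EKL forms at $0$ over $k$ is closed under multiplication and contains the multiplicative identity of $\GW(k)$, which is $\langle 1 \rangle$. The unit $\langle 1\rangle$ is realized by the identity map $\id : \bbA^1 \to \bbA^1$, whose local ring $Q_0(\id) = k[x]/(x)$ is one-dimensional with $E = 1$, so $w_0(\id) = \langle 1 \rangle$. Alternatively one may take $\id : \bbA^n \to \bbA^n$ for any $n$. This handles the unit.

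For closure under multiplication, suppose $q_1$ and $q_2$ are realizable, say $q_i = w_0(f^{(i)})$ for maps $f^{(i)} : \bbA^{n_i} \to \bbA^{n_i}$ with isolated zero at $0$. First I would pad the map of smaller source dimension so that both maps have the same source: if $n_1 < n_2$, replace $f^{(1)}$ by $\tilde f^{(1)} : \bbA^{n_2} \to \bbA^{n_2}$ given by $(f^{(1)}_1, \ldots, f^{(1)}_{n_1}, x_{n_1+1}, \ldots, x_{n_2})$. As noted in the remark following Lemma~\ref{lem:1dim}, adjoining identity coordinates does not change the EKL class, so $w_0(\tilde f^{(1)}) = w_0(f^{(1)}) = q_1$, and both maps now act on $\bbA^{n_2}$. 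Then apply the chain rule, Theorem~\ref{thm:chainrule}: the composition $f^{(1)} \circ f^{(2)} : \bbA^{n_2} \to \bbA^{n_2}$ has an isolated zero at $0$ (since both factors do) and $w_0(f^{(1)} \circ f^{(2)}) = w_0(f^{(1)}) \cdot w_0(f^{(2)}) = q_1 \cdot q_2$. Hence $q_1 \cdot q_2$ is realizable, and together with $\langle 1 \rangle$ being realizable, the set is a multiplicative submonoid of $\GW(k)$.

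The only point requiring a little care is verifying that $f^{(1)} \circ f^{(2)}$ genuinely has an isolated zero at $0$, i.e. that $Q_0(f^{(1)} \circ f^{(2)})$ is finite-dimensional over $k$ — but this is implicit in the hypotheses of Theorem~\ref{thm:chainrule} and follows since $(f^{(1)}\circ f^{(2)})^{-1}(0)$ near $0$ is contained in $(f^{(2)})^{-1}\big((f^{(1)})^{-1}(0)\big)$, a finite set near the origin. I expect no real obstacle here; the corollary is essentially a formal consequence of the chain rule plus the bookkeeping of source dimensions.
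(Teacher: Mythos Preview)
Your proof is correct and follows the same approach as the paper, which simply states that the claim follows immediately from the chain rule. You have filled in two details the paper leaves implicit: the realization of the unit $\langle 1 \rangle$, and the padding argument needed to apply Theorem~\ref{thm:chainrule} when the two realizing maps have different source dimensions.
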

\begin{proof}
The claim follows immediately from the chain rule. 
\end{proof}

\begin{example}
\label{ex:rank_9}
Consider the polynomials $f = -x^3 -x^2y + 4 xy^2 + 2y^3$ and $g = 2x^3 -x^2y - 5 xy^2 - y^3$. The EKL form of the map $(f,g) : \bbA^2 \to \bbA^2$ considered over the base field $\bbQ$ is the quadratic form $\langle 6, 6, 3, -6, -6, 6, 3, -6, 3\rangle \cong 3\bbH + \langle 3,3,3 \rangle $, which was computed with the assistance of a computer. Hence for rank 9 EKL forms, we may only have 3 hyperbolic summands. Hence $M(9,2;k)\leq 3$, i.e., $N(9,2;k)\geq 3$.
\end{example}

\begin{proposition}
\label{prop:bounds}
Consider the map $f = (xy, y^2-x^2)$ studied in Lemma \ref{lem:2dim-1}. Then the iterated composition $f^{(n)}$ has EKL form 
\begin{equation*}
    w_0(f^{(n)}) = 2^{n-1}(2^n-1) \bbH + 2^n\langle 1 \rangle
\end{equation*}
In particular, $M(4^n,2;k) \leq 2^{n-1}(2^n-1)$, i.e., $N(4^n,2;k)\geq 2^n$. 

Likewise, using the map $f$ in Example \ref{ex:rank_9}, the chain rule shows 
\begin{equation*}
w_0(f^{(n)}) = \frac{9^n - 3^n}{2} \bbH + 3^n \langle 1 \rangle;
\end{equation*}
hence $N(9^n,2;k)\geq 3^n$.
\end{proposition}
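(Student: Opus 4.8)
The plan is to reduce both identities to a binomial expansion in the ring $\GW(k)$ via the chain rule. Write $f^{(n)} = f\circ\cdots\circ f$ for the $n$-fold composite. Since $f$ has an isolated zero at $0$ and $\dim_k Q_0$ is multiplicative under composition (a consequence of the multiplicativity of the rank in Theorem~\ref{thm:chainrule}), $f^{(n)}$ again has an isolated zero at $0$ with $\dim_k Q_0(f^{(n)}) = (\dim_k Q_0(f))^{n}$, and iterating Theorem~\ref{thm:chainrule} gives $w_0(f^{(n)}) = w_0(f)^{n}$ in $\GW(k)$. So the whole problem is to compute an $n$-th power. The only structural facts I need about $\GW(k)$ are the standard identities $\bbH\cdot\langle c\rangle = \bbH$ for every unit $c$ and $\langle c\rangle^{2} = \langle 1\rangle$; combining the first with $\bbH = \langle 1\rangle + \langle -1\rangle$ yields $\bbH^{2} = 2\bbH$, hence $\bbH^{j} = 2^{j-1}\bbH$ for all $j\geq 1$.

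For the first family, Lemma~\ref{lem:2dim-1} with $a = b = 1$ identifies the base form as $w_0(f) = \langle 1,-1,1,1\rangle = \bbH + 2\langle 1\rangle$. Expanding,
\[
w_0(f^{(n)}) = (\bbH + 2\langle 1\rangle)^{n} = 2^{n}\langle 1\rangle + \sum_{j=1}^{n}\binom{n}{j}\,\bbH^{j}(2\langle 1\rangle)^{n-j},
\]
and for $j\geq 1$ the $j$-th summand equals $\binom{n}{j}\,2^{j-1}2^{n-j}\bbH = \binom{n}{j}\,2^{n-1}\bbH$; summing and using $\sum_{j=1}^{n}\binom{n}{j} = 2^{n}-1$ gives $w_0(f^{(n)}) = 2^{n-1}(2^{n}-1)\bbH + 2^{n}\langle 1\rangle$. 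As a consistency check, the rank is $2\cdot 2^{n-1}(2^{n}-1) + 2^{n} = 4^{n} = \dim_k Q_0(f^{(n)})$. The second family goes the same way: by Example~\ref{ex:rank_9} the base form over $\bbQ$ is $w_0(f) = 3\bbH + \langle 3,3,3\rangle = 3\bbH + 3\langle 3\rangle$, and the identical expansion — using $\bbH\cdot\langle c\rangle = \bbH$ to absorb the twists on the cross terms and $\langle 3\rangle^{2} = \langle 1\rangle$ to simplify the pure power $(3\langle 3\rangle)^{n} = 3^{n}\langle 3^{n}\rangle$ — yields $w_0(f^{(n)}) = \tfrac{9^{n}-3^{n}}{2}\bbH + 3^{n}\langle 3^{n}\rangle$, where $\langle 3^{n}\rangle$ is $\langle 1\rangle$ for $n$ even and $\langle 3\rangle$ for $n$ odd; thus the ``$3^{n}\langle 1\rangle$'' in the statement is exact for even $n$ and differs only by the harmless twist $\langle 3\rangle$ for odd $n$.

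To extract the bounds on $M$ and $N$, I would invoke the Witt decomposition: the hyperbolic part already displayed accounts for a Witt index of at least $2^{n-1}(2^{n}-1)$, respectively $\tfrac{9^{n}-3^{n}}{2}$, and the residual form $2^{n}\langle 1\rangle$ (resp.\ $3^{n}\langle 3^{n}\rangle$) contributes no further hyperbolic summand exactly when it is anisotropic over $k$ — which holds whenever $k$ is formally real, e.g.\ $k = \bbQ$ or $k = \bbR$, since then a positive scalar multiple of a sum of squares has no nontrivial zero. In that case the anisotropic part has rank $2^{n}$ (resp.\ $3^{n}$), which gives precisely the claims $M(4^{n},2;k) \leq 2^{n-1}(2^{n}-1)$, $N(4^{n},2;k) \geq 2^{n}$, and $N(9^{n},2;k) \geq 3^{n}$.

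There is no deep obstacle: once the chain rule (Theorem~\ref{thm:chainrule}) is in hand, everything is bookkeeping. The two points that require care are (i) correctly reading off the base EKL forms from Lemma~\ref{lem:2dim-1} and Example~\ref{ex:rank_9} and confirming that $f^{(n)}$ still has an isolated zero at $0$, and (ii) the anisotropy of the residual diagonal form, which is the only place the base field matters and which one should state for formally real $k$ — over a finite field, for instance, the residual form becomes isotropic and the stated $M$/$N$ bounds would need adjustment, even though the $\GW(k)$-level identity for $w_0(f^{(n)})$ remains valid over every field of characteristic different from $2$. The parity wrinkle $\langle 3^{n}\rangle$ versus $\langle 1\rangle$ is purely cosmetic and does not affect the rank of the anisotropic kernel.
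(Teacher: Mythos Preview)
Your proof is correct and follows the same approach as the paper, whose entire proof reads ``The calculations follow immediately from the chain rule''; you have simply supplied the binomial expansion that the paper leaves implicit. Your two side observations --- that the residual term in the second family is $3^{n}\langle 3^{n}\rangle$ rather than $3^{n}\langle 1\rangle$ for odd $n$, and that the stated $M$/$N$ bounds require the residual diagonal form to be anisotropic (hence $k$ formally real) --- are both valid refinements that the paper glosses over.
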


\begin{proof}
The calculations follow immediately from the chain rule. 
\end{proof}

Computational evidence for small values of $n$ suggest that the bounds in Proposition \ref{prop:bounds} are in fact equalities: $N(9^n,2;k) = 3^n$ and $N(4^n,2;k) = 2^n$, suggesting the equation $N(p^{2n},2;k)=p^n$ holds. In fact, the general result of Eisenbud and Levine in \cite[Theorem 3.9 (i)]{ELT} gives the bound $N(n,2;k)\leq \sqrt{n}$, which when combined with Proposition \ref{prop:bounds} gives the results $N(9^n,2;k)=3^n$ and $N(4^n,2;k)=2^n$ that we anticipated. Furthermore, we have observed that $N(3)=N(5)=1$ and computational experiments suggest $N(7)=1$ and $N(11,2;k)=1$ too. It is reasonable to conjecture that for odd primes $p$ that $N(p,2;k)=1$, and perhaps $N(p)=1$ holds more generally. 

Eisenbud and Levine show that the bounds above are not tight when maps $f : \bbA^n \to \bbA^n$ are allowed for $n\geq 3$. In \cite[Example, page 24]{ELT} they produce an example $f : \bbA^4_\bbR \to \bbA^4_\bbR$ where $\dim Q_0(f) = 16$ where the signature of $w_0(f)$ is $6$. That is, $N(16,4;\bbR)\geq 6$ whereas our bound in the planar case is only $N(16,2;k)\geq 4$.

\section{Applications}

\subsection{Representability in motivic homotopy theory}

For any map $f : \bbA^n \to \bbA^n$ with an isolated zero at $0$, Kass and Wickelgren \cite[Definition 11]{KassWickelgren} construct a map in the motivic homotopy category $f_0 : \bbP^n/\bbP^{n-1} \to \bbP^n/\bbP^{n-1}$ that encodes the local behavior of $f$ at $0$. To obtain a local degree in motivic homotopy theory, Kass and Wickelgren apply Morel's $\bbA^1$-Brouwer degree $\deg^{\bbA^1} : [\bbP^n/\bbP^{n-1}, \bbP^n/\bbP^{n-1}] \to \GW(k)$ to $f_0$ to obtain the class of a quadratic form, which we write as $\deg^{\bbA^1}_0(f) = \deg(f_0)$. The main result of Kass and Wickelgren in \cite{KassWickelgren} is that the local degree $\deg^{\bbA^1}_0(f)$ defined using Morel's degree map is equal to the class of the EKL form of $f$ at $0$ in $\GW(k)$. Note that Morel's degree map is in fact an isomorphism when $n \geq 2$, so that algebraic results about EKL forms can be translated into statements about motivic homotopy theory.

\begin{theorem}
Suppose $q \in \GW(k)$ is represented by a quadratic form with rank at least 2 that does not represent $0$, that is, $q$ admits no non-zero vector $v$ for which $q(v)=0$. Then there is no map $f : \bbA^n \to \bbA^n$ with an isolated zero at $0$ for which $\deg^{\bbA^1}_0(f) = q$ in $\GW(k)$. 
\end{theorem}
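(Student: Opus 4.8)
The plan is to reduce this statement to an algebraic fact about EKL forms that has essentially already been established in the excerpt, namely Theorem \ref{thm:H-summand}. First I would recall the connection between Morel's $\bbA^1$-degree and EKL forms: by the main result of Kass and Wickelgren, for any map $f : \bbA^n \to \bbA^n$ with an isolated zero at $0$, we have $\deg^{\bbA^1}_0(f) = w_0(f)$ in $\GW(k)$. So if there were a map $f$ with $\deg^{\bbA^1}_0(f) = q$, then $q$ would be represented by the quadratic form $\beta_\phi$ on $Q_0(f)$, and in particular $\dim_k Q_0(f) = \mathrm{rank}(q) \geq 2$.

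Next I would invoke Theorem \ref{thm:H-summand}: since $\dim_k Q_0(f) \geq 2$, the EKL class $w_0(f)$ contains $\bbH$ as a direct summand, so $q \cong \bbH \oplus q'$ for some quadratic form $q'$ over $k$. The final step is the observation that $\bbH$ represents $0$: the vector $(1,1)$ satisfies $\bbH(1,1) = 1^2 - 1^2 = 0$. Hence the summand $\bbH$ inside $q$ supplies a non-zero isotropic vector for $q$ (namely $(1,1,0,\ldots,0)$ in a splitting $q = \bbH \oplus q'$), contradicting the hypothesis that $q$ does not represent $0$. Therefore no such $f$ can exist.

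The argument is short and essentially formal once Theorem \ref{thm:H-summand} is in hand, so there is no serious obstacle. The only point requiring a little care is the passage from ``$q$ is represented by $\beta_\phi$ as an element of $\GW(k)$'' to ``$q$, as an honest quadratic form, splits off $\bbH$ and hence represents $0$.'' This is legitimate because $q$ is assumed to be represented by an \emph{actual} non-degenerate quadratic form whose isometry class in $\GW(k)$ equals $w_0(f)$; over a field of characteristic not $2$, Witt cancellation implies that the isometry class of a non-degenerate form is determined by its class in $\GW(k)$, so $q \cong w_0(f)$ as forms, and the splitting $w_0(f) \cong \bbH \oplus w'$ from Theorem \ref{thm:H-summand} transfers to $q$. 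One could alternatively phrase the conclusion purely in terms of $\GW(k)$: an anisotropic form of rank $\geq 2$ is never equal in $\GW(k)$ to a form containing a hyperbolic summand, again by Witt decomposition. Either way, the characteristic $\neq 2$ hypothesis (already standing throughout the paper) is what makes the reduction clean.
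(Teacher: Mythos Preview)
Your proposal is correct and follows essentially the same approach as the paper: invoke the Kass--Wickelgren identification $\deg^{\bbA^1}_0(f) = w_0(f)$ and then apply Theorem \ref{thm:H-summand} to obtain a hyperbolic summand, which contradicts anisotropy. Your added remark about Witt cancellation is a careful justification of a step the paper leaves implicit, but the argument is otherwise identical.
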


\begin{proof}
This follows immediately from the identification of the local $\bbA^1$-Brouwer degree with the EKL form of a map by Kass and Wickelgren, combined with Theorem \ref{thm:H-summand}. 
\end{proof}

\begin{corollary}
If $-1$ is not a square in $k$, then the quadratic form $\langle 1, 1\rangle$ is not representable as a local degree map under Morel's $\bbA^1$-Brouwer degree isomorphism. 
\end{corollary}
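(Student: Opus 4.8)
The plan is to show that $\langle 1,1\rangle$ is anisotropic over $k$ whenever $-1$ is not a square, and then invoke the preceding theorem. First I would observe that a nonzero isotropic vector for the quadratic form $q(x,y)=x^2+y^2$ would be a pair $(a,b)\in k^2$, not both zero, with $a^2+b^2=0$. If $b=0$ then $a^2=0$ forces $a=0$, a contradiction; so $b\neq 0$ and we may divide to get $(a/b)^2=-1$, exhibiting $-1$ as a square in $k$. Contrapositively, if $-1$ is not a square then $\langle 1,1\rangle$ represents $0$ only trivially, i.e.\ it does not represent $0$ in the sense of the theorem.

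Next I would note that $\langle 1,1\rangle$ has rank $2\geq 2$, so the hypotheses of the theorem immediately above are satisfied. Applying that theorem with $q=\langle 1,1\rangle$, there is no map $f:\bbA^n\to\bbA^n$ with an isolated zero at $0$ for which $\deg^{\bbA^1}_0(f)=\langle 1,1\rangle$ in $\GW(k)$. Since Morel's degree map $\deg^{\bbA^1}:[\bbP^n/\bbP^{n-1},\bbP^n/\bbP^{n-1}]\to\GW(k)$ is an isomorphism for $n\geq 2$, this says precisely that $\langle 1,1\rangle$ is not in the image of the local degree construction, i.e.\ it is not representable as a local degree map under Morel's $\bbA^1$-Brouwer degree isomorphism.

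I do not anticipate any real obstacle here: the only content is the elementary field-theoretic fact that $x^2+y^2$ is anisotropic exactly when $-1\notin (k^\times)^2$, and the rest is a direct citation of the theorem and of Kass--Wickelgren's identification together with Morel's isomorphism. The one point to state carefully is the logical direction — we need that $\langle 1,1\rangle$ \emph{does not} represent $0$, which is the hypothesis of the theorem, and this is exactly the contrapositive of ``$-1$ a square implies $\langle 1,1\rangle$ isotropic.''
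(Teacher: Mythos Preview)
Your proposal is correct and follows exactly the approach implicit in the paper: the corollary is stated without proof there because it is immediate from the preceding theorem, and you have simply supplied the elementary verification that $\langle 1,1\rangle$ is anisotropic when $-1\notin (k^\times)^2$ together with the direct citation of that theorem.
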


Because Morel's degree homomorphism $\deg : [\bbP^n/\bbP^{n-1}, \bbP^n/\bbP^{n-1}] \to \GW(k)$ is an isomorphism for $n \geq 2$, we can translate our results into statements about the set of motivic homotopy classes $[\bbP^n/\bbP^{n-1}, \bbP^n/\bbP^{n-1}]$. We give an example using our result about rank 5 EKL forms. 

\begin{theorem}
\label{thm:maps-rank-5}
Let $k$ be a field and consider a map $g : \bbP^n/\bbP^{n-1} \to \bbP^n/\bbP^{n-1}$ with $n\geq 2$. If the motivic Brouwer degree of $g$ is represented by a quadratic form $q$ of rank 5, then $g$ is $\bbA^1$-homotopy equivalent to a local degree map $f_0$ for some map $f : \bbA^n \to \bbA^n$ if and only if $q$ is of the form $2\bbH + \langle a \rangle$.
\end{theorem}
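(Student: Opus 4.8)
The plan is to transport Theorem~\ref{thm:rank-5} across Morel's degree isomorphism, using the Kass--Wickelgren identification $\deg^{\bbA^1}_0(f) = w_0(f)$ in $\GW(k)$. Both directions are then essentially formal consequences of results already established.

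For the forward direction, suppose $g$ is $\bbA^1$-homotopy equivalent to a local degree map $f_0$ for some $f : \bbA^n \to \bbA^n$ with an isolated zero at $0$. Then in $\GW(k)$ we have $q = \deg^{\bbA^1}(g) = \deg^{\bbA^1}(f_0) = \deg^{\bbA^1}_0(f) = w_0(f)$, where the last equality is the main theorem of Kass--Wickelgren \cite{KassWickelgren}. Since $q$ is represented by a form of rank $5$, we get $\dim_k Q_0(f) = 5$, so Theorem~\ref{thm:rank-5} applies and $w_0(f) \cong 2\bbH \oplus \langle a \rangle$ for some unit $a$; hence $q \cong 2\bbH \oplus \langle a\rangle$.

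For the converse, I would exhibit an explicit map realizing each form $2\bbH \oplus \langle a \rangle$. Take the one-variable map $x \mapsto a x^5$: by Lemma~\ref{lem:1dim} its EKL class is $2\bbH \oplus \langle a \rangle$. Following the remark after Lemma~\ref{lem:1dim}, extend it to a map $f : \bbA^n \to \bbA^n$ by setting $f_1 = a x_1^5$ and $f_i = x_i$ for $2 \leq i \leq n$; then $w_0(f) \cong 2\bbH \oplus \langle a\rangle = q$. Let $f_0$ be the associated local degree map of \cite[Definition 11]{KassWickelgren}. Then $\deg^{\bbA^1}(f_0) = w_0(f) = q = \deg^{\bbA^1}(g)$ in $\GW(k)$, and since $\deg^{\bbA^1} \colon [\bbP^n/\bbP^{n-1}, \bbP^n/\bbP^{n-1}] \to \GW(k)$ is an isomorphism for $n \geq 2$, it is in particular injective, so $g$ is $\bbA^1$-homotopy equivalent to $f_0$.

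I do not expect a genuine obstacle here beyond the results already in hand; the one point requiring a little care is to ensure that for \emph{every} unit $a$ some map has EKL class exactly $2\bbH \oplus \langle a\rangle$, so that the ``if'' direction is non-vacuous for each such form — this is why I invoke the rank-$5$ instance of Lemma~\ref{lem:1dim} rather than a two-variable example of lower even rank. It is also worth recording that $2\bbH \oplus \langle a\rangle$ and $2\bbH \oplus \langle a'\rangle$ agree in $\GW(k)$ exactly when $a/a'$ is a square, so the content of the classification is a statement about the square class of $a$.
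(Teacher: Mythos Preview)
Your proof is correct and follows essentially the same approach as the paper: invoke Morel's degree isomorphism for $n\geq 2$ together with Theorem~\ref{thm:rank-5} (and the Kass--Wickelgren identification). The paper's own proof is a two-line citation of these facts, whereas you spell out the converse direction explicitly via the rank-$5$ instance of Lemma~\ref{lem:1dim}; this is a welcome elaboration but not a different route.
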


\begin{proof}
This follows from the fact that Morel's degree map is an isomorphism when $n\geq 2$ \cite[Corollary 1.24]{Morel12} and our result on rank 5 EKL forms in Theorem \ref{thm:rank-5}. %
\end{proof}

Of course, similar results in rank $6$ and $7$ follow directly from the inequality of Eisenbud, Levine, and Teissier described in Theorem \ref{thm:ELT}. In particular, in ranks 1, 3, 5, and 7 the discriminant of the EKL form contains all of the information of the local degree.  

\subsection{Finite fields}
\label{sec:finite-fields}
Over a finite field with odd characteristic, the problem of identifying the isometry class of a quadratic form in $\GW(k)\cong \bbZ \oplus \bbZ/2 $ is resolved simply by considering the rank and discriminant of the quadratic form. With this simple characterization, the lemmas of section \ref{sec:computations} allow us to completely classify which quadratic forms arise as EKL classes.  

\begin{theorem}
\label{thm:fq}
Let $\bbF_q$ be a finite field of odd characteristic. Every quadratic form over $\bbF_q$ is representable as an EKL class except for the rank 2 form with discriminant not equal to $-1 \in k^{\times}/k^{\times 2}$.  
\end{theorem}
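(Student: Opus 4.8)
The plan is to combine the structural results on EKL forms with the classification of quadratic forms over $\bbF_q$, which is governed entirely by rank and discriminant (the latter living in $\bbF_q^\times/\bbF_q^{\times 2}\cong \bbZ/2$). So I must show: (i) every quadratic form over $\bbF_q$ other than the rank $2$ form of discriminant $\neq -1$ actually occurs as an EKL class; and (ii) that exceptional rank $2$ form does \emph{not} occur. Part (ii) is immediate from Theorem \ref{thm:H-summand}: any EKL class of rank $\geq 2$ contains $\bbH$ as a summand, and a rank $2$ form containing $\bbH$ must equal $\bbH$, whose discriminant is $-1$. So the only rank $2$ EKL class is $\bbH$, ruling out the other one. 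The rank $1$ case $\langle a\rangle$ is realized by $x\mapsto ax^2$ on $\bbA^1$ (Lemma \ref{lem:1dim} with $n=2$ would give $\bbH$; rather use $x\mapsto x$ composed... more simply $x \mapsto ax$ has $Q_0 = k$ and $E = a$, giving $\langle a^{-1}\rangle = \langle a\rangle$ in $\GW(\bbF_q)$ since squares don't matter). Actually the cleanest rank $1$ witness is the linear map $x\mapsto ax$.

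For part (i) with rank $N\geq 3$, I would argue as follows. By Theorem \ref{thm:H-summand} every EKL class of rank $N\geq 2$ is $m\bbH \oplus (\text{anisotropic part})$ with $m\geq 1$; over $\bbF_q$ the anisotropic part has dimension $0$, $1$, or $2$, so every EKL class of rank $N$ is isometric to one of $\tfrac{N}{2}\bbH$ (for $N$ even), $\tfrac{N-1}{2}\bbH\oplus\langle a\rangle$ (for $N$ odd), $\tfrac{N-2}{2}\bbH\oplus\langle a,b\rangle$ (for $N$ even with anisotropic rank $2$), where for a fixed rank these are distinguished by discriminant. Thus it suffices to realize, for each rank $N\geq 3$, enough discriminants. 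Concretely: for odd $N\geq 3$, the maps of Lemma \ref{lem:1dim} (the case $n=N$ odd) give $\tfrac{N-1}{2}\bbH\oplus\langle a\rangle$ for \emph{every} unit $a$, hence both discriminant classes; for even $N\geq 4$, Lemma \ref{lem:2dim-2} (with the exponent chosen so $\tfrac{n}{2}+1 = \tfrac{N}{2}$, i.e.\ $n = N-2$) produces $\tfrac{N-2}{2}\bbH\oplus\langle a,b\rangle$ for all units $a,b$, and choosing $b=-a$ gives $\tfrac{N}{2}\bbH$ while choosing $a,b$ with $ab$ a non-square gives the other discriminant class. For $N=4$ this is exactly Lemma \ref{lem:2dim-1}. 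This covers every rank $N\geq 3$ and both discriminants, so every such form is an EKL class.

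The one genuinely delicate point is bookkeeping at \textbf{rank $2$ and the very bottom}: one must be careful that the only rank $2$ EKL class really is $\bbH$ and nothing else, which is where Theorem \ref{thm:H-summand} does all the work — a rank $2$ form with an $\bbH$ summand \emph{is} $\bbH$ — and that rank $1$ gives all of $\langle a\rangle$. I do not expect any obstacle beyond assembling the lemmas; the main thing to verify cleanly is that over $\bbF_q$ a form of given rank $\geq 3$ is determined by its discriminant among those admitting an $\bbH$-summand (equivalently, that $\GW(\bbF_q)\cong \bbZ\oplus\bbZ/2$ via rank and discriminant, together with the fact that $2\bbH \cong \langle 1,1,-1,-1\rangle$ and that any $3$-dimensional form over $\bbF_q$ is isotropic, so contains $\bbH$). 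After that the realization is just citing Lemmas \ref{lem:1dim}, \ref{lem:2dim-1}, and \ref{lem:2dim-2} with the stated parameter choices, and invoking Theorem \ref{thm:H-summand} for the single exclusion.
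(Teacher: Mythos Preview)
Your proposal is correct and follows essentially the same approach as the paper: use Theorem \ref{thm:H-summand} for the rank $2$ exclusion, Lemma \ref{lem:1dim} for odd ranks, and a two-variable lemma for even ranks $\geq 4$ (you cite Lemma \ref{lem:2dim-2}, the paper cites the slightly more general Lemma \ref{lem:n-m-powers}, but either suffices). Your handling of rank $1$ via the linear map $x\mapsto ax$ is actually cleaner than the paper's appeal to Lemma \ref{lem:1dim}, whose hypothesis is $n\geq 2$.
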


\begin{proof}
Lemma \ref{lem:1dim} shows that there is a map $f: \bbA^1 \to \bbA^1$ with EKL class $w_0(f) = \langle a \rangle$ for any unit $a$, which covers the rank 1 case. In rank 2, Theorem \ref{thm:H-summand} shows that the only EKL class is $\bbH$, which has discriminant -1. Thus when $q \equiv 3 \bmod 4$, the unit $-1$ is not a square, and so $\langle 1, 1\rangle$ is not representable as an EKL class. When $q\equiv 1 \bmod 4$, the unit $-1$ is a square class, and thus any rank 2 form with discriminant a non-square is not representable as an EKL class. 

Any quadratic form of rank $2n +1 \geq  3$ is representable as an EKL class because Lemma \ref{lem:1dim} shows that we can choose a map $f : \bbA^1 \to \bbA^1$ so that $w_0(f)$ has rank $2n+1$ and discriminant $a$ for any unit $a \in k^{\times}$. 

For a quadratic form of rank $2n \geq 4$, Lemma \ref{lem:n-m-powers} shows that we can construct a map $f : \bbA^2 \to \bbA^2$ of rank $2n$ such that $w_0(f)$ has discriminant $(-1)^{n-1}ab$ for any choice of units $a,b\in k^{\times}$. Hence we can realize both isometry classes of quadratic forms of rank $2n$  as EKL classes. 
\end{proof}

\subsection{Singularities with specified Milnor number}
\label{sec:real}

Another avenue for applications for the structure of EKL forms that we have observed is to the study of isolated hypersurface singularities, following the work of Kass and Wickelgren \cite{KassWickelgren}. In particular, our results produce restrictions on the way singularities can degenerate into nodes just by knowing the Milnor number of the singularity in the cases where the Milnor number is at most $5$. We require the base field to have characteristic different from $2$ in this section.

We refer the reader to Kass and Wickelgren \cite[\S 8]{KassWickelgren} and Pauli and Wickelgren \cite[\S6.2]{PauliWickelgren} for a more thorough discussion, but we summarize one kind of deformation here. Let $X = \{ f = 0 \}$ be a hypersurface in $\bbA^n$ and assume that $0$ is an isolated zero of its gradient, $\grad(f) : \bbA^n \to \bbA^n$. We can then look at the local degree of $\grad(f)$ at $0$ as an invariant of the singularity of $f$ at $0$. Over the field $k=\bbC$, the Milnor number of $f$ at $0$ is $\mu(f) = \dim_k Q_0(\grad(f))$. This definition may be enriched in motivic homotopy theory by defining the $\bbA^1$-Milnor number of $f$ at $0$ to be $\mu^{\bbA^1}(f) = \deg^{\bbA^1}_0(\grad(f))$. 

Kass and Wickelgren study the family of deformations of the hypersurface $f$ that take the form $\{ f(x_1,...,x_n) + \sum_i a_ix_i = t \}$, i.e., the fibers of the map $F = f + \sum_i a_i x_i : \bbA^n \to \bbA^1$. For a generic choice of $k$-point $(a_1,...,a_n) \in \bbA^n(k)$, the hypersurfaces in this family (the fibers of the map) have only nodal singularieties. Kass and Wickelgren prove in \cite[Equation (6) and Corollary 45]{KassWickelgren} that $\mu^{\bbA^1}(f)$ counts the number of nodal fibers of this family. To be more precise, if $f : \bbA^n \to \bbA^1$ is such that $\grad(f)$ is finite and separable and $0$ is the only singularity of $f$, then 
\begin{equation}
    \mu^{\bbA^1}(f) = \sum_{x \text{ node of } F} \Tr_{k(x)/k}\type(x,f)
\end{equation}
where $\type(x,f)$ is the $\bbA^1$-Milnor number of the singular point of the nodal fiber of $F$ corresponding to $x$. 

Pauli and Wickelgren \cite[\S 6.2]{PauliWickelgren} extend the previous result to more general deformations. For a hypersurface $ X = \{ f = 0 \} \subseteq \bbA^n$, Pauli and Wickelgren are able to obtain the same stability result on the bifurcation of an isolated singularity of $f$ under more general deformations, which take the form of a fiber of the map $f + t g : \bbA^n_{k[[t]]} \to \bbA^1_{k[[t]]}$, for $g\in k[x_1,...,x_n][[t]]$. We encourage the reader to see their paper for a more thorough exposition. 

Our results can now be applied by making assumptions on the Milnor number of the singularity of $f$ at $0$ and inferring restrictions on what kinds of families of nodes the singularity can degenerate into. The next theorem is the evident generalization of \cite[Example 14]{PauliWickelgren} from the cusp---which has $\bbA^1$-Milnor number $\bbH$---to the case of any singularity with Milnor number 2. 

\begin{theorem}
Assume the characteristic of $k$ is not $2$ and assume $-1$ is not a square in $k$. Consider any hypersurface $X = \{ f = 0 \}$ in $\bbA^n$ for which $\grad(f)$ has an isolated zero at $0$. If the Milnor number of $f$ at zero $\mu^{\bbA^1}(f) = \dim_k Q_0(\grad(f))$ is equal to $2$, then if the singularity degenerates into a pair of $k$-rational nodes, it must be that the nodes have type $\langle a \rangle $ and $\langle - a \rangle$ for some unit $a\in k^{\times}$. 
\end{theorem}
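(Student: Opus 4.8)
The plan is to first pin down $\mu^{\bbA^1}(f)$ exactly, and then feed it into the Kass--Wickelgren node-counting formula recalled above. Since $\mu^{\bbA^1}(f) = \deg^{\bbA^1}_0(\grad(f)) = w_0(\grad(f))$, the hypothesis $\dim_k Q_0(\grad(f)) = 2$ says precisely that this EKL class has rank $2$. By Theorem~\ref{thm:H-summand}, an EKL class of rank $\geq 2$ contains $\bbH$ as a direct summand, so one of rank exactly $2$ must equal $\bbH$. Hence $\mu^{\bbA^1}(f) = \bbH$ in $\GW(k)$.

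Next I would apply the identity $\mu^{\bbA^1}(f) = \sum_{x \text{ node of } F} \Tr_{k(x)/k} \type(x,f)$ of Kass and Wickelgren (in the form extended by Pauli--Wickelgren to the relevant deformations). By assumption the singularity degenerates into a pair of $k$-rational nodes $x_1, x_2$. Each $x_i$ is an ordinary double point, so $\grad$ of the deformed hypersurface is \'etale there, $Q_{x_i}$ is one-dimensional, and $\type(x_i, f) = \langle a_i \rangle$ is a rank-one form with $a_i \in k^\times$; moreover $\Tr_{k(x_i)/k} = \id$ since $x_i$ is $k$-rational. That there are exactly two such nodes and no others is forced by rank: $\bbH$ has rank $2$, each $k$-rational node contributes a rank-one summand, and a node over a proper extension $k'/k$ would contribute a summand of rank $[k':k] \geq 2$. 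The formula thus reduces to $\bbH = \langle a_1 \rangle + \langle a_2 \rangle$ in $\GW(k)$, i.e.\ (using Witt cancellation, since both sides are genuine rank-$2$ forms) $\langle a_1, a_2 \rangle \cong \bbH$.

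Finally I would unwind $\langle a_1, a_2 \rangle \cong \bbH$. As $\bbH$ is the unique nondegenerate isotropic binary form, $\langle a_1, a_2 \rangle$ is isotropic; solving $a_1 x^2 + a_2 y^2 = 0$ with $y \neq 0$ shows $-a_1 a_2$ is a square in $k^\times$, which is exactly the statement $\langle a_2 \rangle = \langle -a_1 \rangle$ in $\GW(k)$. Setting $a = a_1$, the two nodes have types $\langle a \rangle$ and $\langle -a \rangle$, as claimed. The hypothesis that $-1$ is not a square plays no role in the deduction itself; it is what guarantees $\langle a \rangle \neq \langle -a \rangle$, so that the conclusion is a genuine constraint distinguishing the two node types (without it the two types could coincide).

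The main obstacle I anticipate is not the algebra but making precise that the phrase ``degenerates into a pair of $k$-rational nodes'' really does put the node-counting identity into the stated two-term shape: one must check that the running hypotheses under which the Kass--Wickelgren/Pauli--Wickelgren equality holds (here, $\grad(f)$ finite and separable, with $0$ the only singularity) are in force, and then confirm via the rank count above that ``a pair'' is forced rather than merely assumed. Once that bookkeeping is in place, the rest is the short $\GW(k)$ computation sketched above.
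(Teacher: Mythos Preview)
Your proposal is correct and follows essentially the same approach as the paper: the paper's proof simply cites Pauli--Wickelgren's node-counting result together with Theorem~\ref{thm:H-summand}, and you have unpacked exactly how those two ingredients combine, including the explicit identification $\langle a_1, a_2 \rangle \cong \bbH \Rightarrow \langle a_2 \rangle = \langle -a_1 \rangle$ and the (correct) observation that the hypothesis on $-1$ only serves to make the conclusion a nontrivial constraint.
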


\begin{proof}
This follows from \cite[Theorem 4]{PauliWickelgren} and Theorem \ref{thm:H-summand}. 
\end{proof}

Over the field of real numbers, the signature of the $\bbA^1$-Milnor number of a singularity encodes topological information about the Milnor fibers of the singularity. The work of Arnold \cite{Arnold-1978} presents the relevant terminology and gives several results, which we briefly summarize here. We also suggest the reader look at van Straten and Warmt for an introduction and related results \cite{vanStraten-Warmt}.

For $f : \bbA^n_\bbR \to \bbA^1_\bbR$ a map with an isolated singularity at $0$ and $f(0)=0$, let $B(0;\epsilon)$ be a sufficiently small ball about the origin and $0<\lvert \eta \rvert $ sufficiently small, so that $\eta \in \im(f)$, in particular. The real Milnor fibers of $f$ are the sets: 
\begin{equation*}
    F_{\eta} = B(0;\epsilon)\cap f^{-1}(\eta)
\end{equation*}
The topology of such a fiber is controlled by the signature of the $\bbA^1$-Milnor number of the singularity of $f$ at $0$. In particular,  Arnold \cite[Sections 1--2]{Arnold-1978} (compare with \cite[Theorem 1.3]{vanStraten-Warmt}) calculates the reduced Euler characteristic (Euler characteristic minus $1$) of the Milnor fibers to be:
\begin{equation*}
    \textrm{signature}(\mu^{\bbA^1}_0(f))  = -\tilde{\chi}(F_{-\eta})  
         = (-1)^{n-1}\tilde{\chi}(F_{\eta})
\end{equation*}
when $\eta > 0$.

From this result, the reduced Euler characteristics of the fibers $F_{\eta}$ must be between $-\mu(f)$ and $\mu(f)$. Our characterization of EKL forms shows that this naive bound can be improved, and for small enough Milnor numbers, the reduced Euler characteristic of these Milnor fibers can be completely determined. 

\begin{theorem}
Let $f : \bbA^n_{\bbR} \to \bbA^1_{\bbR}$ be a polynomial map with isolated singularity at $0$ with Milnor number $m \geq 2$. Then the reduced Euler characteristic of the Milnor fibers is bounded in absolute value by $m-2$. 
If $\mu(f)= 2$, then the reduced Euler characteristic of the Milnor fibers is $0$. If the Milnor number is $3$, $5$, or $7$, then the reduced Euler characteristic of the Milnor fibers is $\pm 1$.
\end{theorem}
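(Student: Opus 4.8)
The plan is to reduce the statement to the structure theorems for EKL forms proved above, via Arnold's identification of the reduced Euler characteristic with a signature. First I would recall that the $\bbA^1$-Milnor number $\mu^{\bbA^1}_0(f)$ is, by Kass--Wickelgren, the class of the EKL form $w_0(\grad f)$ in $\GW(\bbR)$, and that its rank equals $\dim_\bbR Q_0(\grad f) = \mu(f) = m$. Since $\bbR$ has characteristic $0$, the gradient map is automatically finite and separable once $0$ is an isolated singularity of $f$, so Arnold's formula as quoted above applies and yields
\begin{equation*}
\lvert \tilde{\chi}(F_{\eta}) \rvert = \lvert \tilde{\chi}(F_{-\eta})\rvert = \lvert \mathrm{signature}(w_0(\grad f)) \rvert .
\end{equation*}
Thus the theorem becomes a purely algebraic statement about the signature of a rank-$m$ EKL form over $\bbR$.

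Next I would invoke Theorem \ref{thm:H-summand}: for $m \geq 2$ we may write $w_0(\grad f) \cong \bbH \oplus q'$ with $q'$ of rank $m-2$. The signature is additive over orthogonal sums and $\mathrm{signature}(\bbH) = 0$, while any rank-$r$ quadratic form over $\bbR$ has signature bounded in absolute value by $r$. Hence $\lvert\mathrm{signature}(w_0(\grad f))\rvert = \lvert\mathrm{signature}(q')\rvert \leq m-2$, which gives the general bound on $\tilde{\chi}(F_{\pm\eta})$. For $m = 2$ the summand $q'$ has rank $0$, so the signature, and therefore $\tilde{\chi}(F_{\pm\eta})$, vanishes.

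For $m \in \{3,5,7\}$ I would appeal to the classification of low-rank EKL forms. When $m = 3$, Theorem \ref{thm:H-summand} alone forces $q' \cong \langle a\rangle$; when $m = 5$, Theorem \ref{thm:rank-5} gives $w_0(\grad f) \cong 2\bbH \oplus \langle a\rangle$; when $m = 7$, the Eisenbud--Levine--Teissier inequality (Theorem \ref{thm:ELT}) bounds the dimension of the anisotropic part by $7^{1/2} < 2$, hence it is $1$-dimensional and $w_0(\grad f) \cong 3\bbH \oplus \langle a\rangle$. In every one of these cases the anisotropic summand $\langle a\rangle$ has rank $1$ over $\bbR$, so its signature is $\pm 1$, and therefore $\tilde{\chi}(F_{\pm\eta}) = \pm 1$ as claimed.

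The genuinely delicate point here is not the algebra but correctly invoking the analytic input: one must check that the hypotheses of Arnold's theorem (a sufficiently small ball $B(0;\epsilon)$, a generic $\eta$ with $0 < \lvert\eta\rvert$ lying in the image of $f$, an isolated singularity at $0$) are precisely those in the present statement, and one must keep the sign conventions straight so that the word ``signature'' in Arnold's formula agrees with the signature of the EKL form viewed as an element of $\GW(\bbR) \cong \bbZ \oplus \bbZ$. Once this matching is in place, everything else is a bookkeeping application of the results already established.
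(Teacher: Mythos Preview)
Your overall strategy matches the paper's proof exactly: invoke Arnold's identification of $\lvert\tilde\chi(F_{\pm\eta})\rvert$ with $\lvert\mathrm{signature}(w_0(\grad f))\rvert$, then feed in Theorem~\ref{thm:H-summand}, Theorem~\ref{thm:rank-5}, and the ELT inequality for the three listed odd ranks. The general $m-2$ bound and the cases $m=2,3,5$ are handled correctly.

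There is, however, a genuine slip in your $m=7$ step. You write that ELT bounds the anisotropic part by $7^{1/2}<2$, but $\sqrt{7}\approx 2.65$, so this inequality is false. Moreover, the exponent $1-1/n$ in Theorem~\ref{thm:ELT} depends on the number of variables of $\grad f$, and the theorem statement allows arbitrary $n$; you have implicitly set $n=2$ without justification. The repair is short: since $2^3=8>7$, Lemma~\ref{lem:intersection-multiplicity} together with Theorem~\ref{thm:dimension_reduction} lets you reduce (up to a unit, which does not affect the signature's absolute value) to a map $\bbA^n\to\bbA^n$ with $n\leq 2$. For $n=1$ Lemma~\ref{lem:1dim} already gives $3\bbH\oplus\langle a\rangle$. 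For $n=2$ the ELT bound reads $\dim D\leq\sqrt{7}<3$, hence $\dim D\leq 2$; but $7=\dim D+2\dim I$ forces $\dim D$ odd, so $\dim D=1$ and $w_0(\grad f)\cong 3\bbH\oplus\langle a\rangle$ as needed. With this correction your argument is complete and coincides with the paper's.
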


\begin{proof}
This follows from the result of Arnold, stated in \cite[Theorem 1.3]{vanStraten-Warmt} along with our main theorems \ref{thm:H-summand}, \ref{thm:rank-5}, and the ELT inequality in theorem \ref{thm:ELT}.
\end{proof}

The setup to this theorem was certainly known to Eisenbud and Levine in \cite{ELT} and Arnold \cite{Arnold-1978}. We do not claim any originality for this result. Similarly, the following theorem is one of the main results that Eisenbud and Levine sought to prove in their foundational paper \cite{ELT}. 

\begin{theorem}
Let $f: \bbA^n_\bbR \to \bbA^n_\bbR$ be a polynomial map with an isolated zero at $0$. If the rank of the EKL form of $f$ at $0$ is 3, 5, or 7 then the local topological degree of $f$ at $0$ is $\pm 1$. And if the rank of the EKL form of $f$ at $0$ is 2, then the local topological degree of $f$ at $0$ is $0$. 
\end{theorem}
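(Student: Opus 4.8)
The plan is to deduce this from the Eisenbud--Levine theorem \cite{ELT}, which identifies the local topological degree $\deg_0(f)$ of a real polynomial map germ with an isolated zero at the origin with the signature of its EKL form $w_0(f)$ at $0$; once that input is granted, everything reduces to reading off signatures from the structural descriptions of $w_0(f)$ already obtained in this paper. Recall that over $\bbR$ a non-degenerate quadratic form is determined up to isometry by its rank and signature, that $\bbH$ has signature $0$, and that $\langle a \rangle$ with $a \in \bbR^{\times}$ has signature $\pm 1$ according to the sign of $a$.

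First I would dispatch ranks $2$, $3$, and $5$ directly. If $\dim_{\bbR} Q_0(f) = 2$, then Theorem \ref{thm:H-summand} shows $\bbH$ is a summand of $w_0(f)$, and for rank reasons $w_0(f) \cong \bbH$; its signature is $0$, so $\deg_0(f) = 0$. If the rank is $3$, Theorem \ref{thm:H-summand} gives $w_0(f) \cong \bbH \oplus \langle a \rangle$ for some $a \in \bbR^{\times}$, which has signature $\pm 1$. If the rank is $5$, Theorem \ref{thm:rank-5} gives $w_0(f) \cong 2\bbH \oplus \langle a \rangle$, again of signature $\pm 1$.

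The rank $7$ case needs the Eisenbud--Levine--Teissier inequality, but only after a dimension reduction. Using Theorem \ref{thm:dimension_reduction} together with Lemma \ref{lem:intersection-multiplicity}, as observed in the remark following Lemma \ref{lem:intersection-multiplicity}, I would reduce to the case $n \leq 2$; this is harmless because such a reduction alters $w_0(f)$ only by multiplication by a unit, hence changes its signature at most by a sign, which does not affect the assertion $\deg_0(f) = \pm 1$. Now apply Theorem \ref{thm:ELT}: for $n \in \{1,2\}$ the anisotropic part $D$ of $(Q_0(f), w_0(f))$ satisfies $\dim_{\bbR} D \leq 7^{1-1/n} \leq 7^{1/2} < 3$. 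Since the rank $7$ is odd while the hyperbolic part $I \oplus I^{*}$ has even dimension, $\dim_{\bbR} D$ is odd, forcing $\dim_{\bbR} D = 1$; thus $w_0(f) \cong 3\bbH \oplus \langle a \rangle$, of signature $\pm 1$.

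The only ingredient external to this paper is the Eisenbud--Levine signature formula, which I would simply cite. The step I expect to require the most care is the rank $7$ argument: one must make sure the dimension reduction genuinely lands us in the one- or two-variable world, since it is exactly there that the sharp bound $7^{1/2} < 3$ is available, rather than the weaker, $n$-independent bound $7/4$, which only gives $\dim_{\bbR} D \leq 3$ and would spuriously permit $2\bbH \oplus \langle 1,1,1 \rangle$ (signature $\pm 3$); and one must invoke the evenness of the hyperbolic part to pin $\dim_{\bbR} D$ down to $1$.
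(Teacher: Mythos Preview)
Your proposal is correct and follows essentially the same approach as the paper: invoke the Eisenbud--Levine signature formula and then read off the signature from the structural results Theorems~\ref{thm:H-summand}, \ref{thm:rank-5}, and \ref{thm:ELT}. You are in fact more explicit than the paper in the rank~$7$ case: the paper's proof simply cites Theorem~\ref{thm:ELT}, but as you correctly observe, the $n$-independent bound $\dim_{\bbR} D \leq 7/2$ only forces $\dim_{\bbR} D \in \{1,3\}$, so one must first invoke the dimension reduction (Theorem~\ref{thm:dimension_reduction} and Lemma~\ref{lem:intersection-multiplicity}, as remarked immediately after the latter) to land in $n \leq 2$ before the sharper bound $7^{1/2} < 3$ is available---the paper leaves this implicit, having already recorded the rank~$7$ conclusion in Table~\ref{tab:min_hyperbolic_2}.
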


\begin{proof}
This follows from Eisenbud and Levine's main result \cite[Theorem  1.2]{ELT}, our main results, theorems \ref{thm:H-summand}, \ref{thm:rank-5}, and the Eisenbud--Levine--Teissier inequality in rank 7, see theorem \ref{thm:ELT}. 
\end{proof}

The Eisenbud--Levine--Teissier inequality can of course be used to obtain more general bounds for the degree. Explicitly, Eisenbud and Levine state this in \cite[Theorem 2.1]{ELT} and give further geometric interpretations in \cite[\S 2]{ELT}.

\begin{acknowledgements}
We thank the Center for Advanced Study in Oslo for their hospitality, where we were able to complete much of this work and where we had many helpful discussions with Sabrina Pauli. 
We are also very grateful to Kirsten Wickelgren who suggested the applications of our results in section \ref{sec:real} and her enthusiastic support for our project. 
Our thanks also go to Stephen McKean for helpful discussions and to the anonymous referee for many helpful comments and suggestions. 
We are also grateful for the support from the Motivic Hopf Equations project RCN no.\ 250399.
\end{acknowledgements}

\bibliography{bibliography}
\bibliographystyle{mscplain}

\end{document}